\documentclass[12pt]{article}
\textwidth=6.5truein\hoffset-.65truein
\textheight=8.9truein
\voffset-.75truein
\hfuzz2pc
\RequirePackage{amsthm,amsmath,natbib,amssymb}
\let\cal=\mathcal
\let\Bbb=\mathbb
\def\bbb{{\Bbb B}}
\def\bbn{{\Bbb N}}
\def\bbr{{\Bbb R}}
\def\bbz{{\Bbb Z}}
\def\calp{{\cal P}}
\def\ep{\epsilon}
\def\uom{\Omega}
\def\udel{\Delta}
\def\lam{\lambda}
\def\tsim{\raise1pt\hbox{$\scriptscriptstyle\sim$}}
\def\var{\mathop{\rm var}}
\def\ess{\mathop{\rm ess}}
\def\essinf{\mathop{\rm ess\; inf}}
\def\esssup{\mathop{\rm ess\; sup}}
\def\remk{\noindent{\bf Remark.~~}}
\theoremstyle{definition}
\newtheorem{dfn}{Definition}[section]
\newtheorem{exam}[dfn]{Example}
\theoremstyle{plain}
\newtheorem{thm}[dfn]{Theorem}
\newtheorem{lem}[dfn]{Lemma}
\newtheorem{coro}[dfn]{Corollary}
\numberwithin{equation}{section}
\def\qqed{\qquad$\Box$}
\date{}

\begin{document}

\title{Conflations of Probability Distributions}

\author{Theodore P. Hill\\
School of Mathematics \\ Georgia Institute of Technology\\
Atlanta GA 30332-0160\\
{\tt hill@math.gatech.edu}
} 

\maketitle
\begin{abstract}
The {\it conflation} of a finite number of probability distributions 
$P_1,\dots, P_n$ is
a consolidation of those distributions into a single probability
distribution $Q=Q(P_1,\dots, P_n)$, where intuitively $Q$ is the conditional distribution
of independent random variables $X_1,\dots, X_n$ with distributions
$P_1,\dots, P_n$, respectively,
given that $X_1=\cdots =X_n$. Thus, in large classes of distributions the conflation
is the distribution determined by the normalized product of the
probability density or probability mass functions. $Q$ is shown to be the
unique probability distribution that minimizes the loss of Shannon
Information in consolidating the combined information from  $P_1,\dots, P_n$ into
a single distribution $Q$, and also to be the optimal consolidation
of the distributions with respect to two minimax likelihood-ratio
criteria.  When $P_1,\dots, P_n$ are Gaussian, $Q$ is Gaussian with mean the classical
weighted-mean-squares reciprocal of variances. A version of the
classical convolution theorem holds for conflations of a large
class of a.c.\ measures.

\bigskip
\noindent {\bf AMS 2000 Classification:}
 Primary: 60E05; Secondary: 62B10, 94A17
\bigskip

\end{abstract}


%
\allowdisplaybreaks
\section{Introduction}
{\it Conflation} is a method for consolidating a finite number of
probability distributions $P_1,\dots, P_n$ into a single probability distribution 
$Q=Q(P_1,\dots, P_n)$. The
study of this method was motivated by a basic problem in science,
namely, how best to consolidate  the information from several
independent experiments, all designed to measure the same unknown
quantity. The experiments may differ in time, geographical location,
methodology and even in underlying theory.  Ideally, of course, all
experimental data, past as well as present, should be incorporated
into the scientific record, but the
result would be of limited practical application. For many purposes, a concise 
consolidation of those distributions is more useful.

For example, to obtain the current internationally-recognized
values of each of the fundamental physical constants (Planck's
constant, Avogadro's number, etc.), the U.S.\ National Institute of
Standards and Technology (NIST) collects independent distributional
data, often assumed to be Gaussian (see Section 6), from various
laboratories. Then, for each fundamental physical constant, NIST
combines the relevant input distributions to arrive at a recommended
value and estimated standard deviation for the constant. Since
these recommended values are usually interpreted as being Gaussian,
NIST has effectively combined the several input distributions into
a single probability distribution.

The problem of combining probability distributions has been well
studied; e.g., \cite{GZ} describes a ``plethora of methods" for finding a
summary $T(P_1,\dots, P_n)$ of $n$ given (subjective) probability measures 
$P_1,\dots, P_n$ that represent
different expert opinions.  All those methods, however, including the
classical convex combination or weighted average ($T(P_1,\dots, P_n)=
\sum^n_{i=1} w_iP_i$, with nonnegative
weights $\{w_i\}$ satisfying $\sum^n_{i=1} w_i=1$) and its various 
nonlinear generalizations,
are {\it idempotent}, i.e., $T(P,\dots, P)=P$. 
For the purpose of combining probability
distributions that represent expert opinions, idempotency is a
natural requirement, since if all the opinions $P_1,\dots, P_n$ agree, the best
summary is that distribution.

But for other objectives for combining distributions, such as
consolidating the results of independent experiments, idempotency
is not always a desirable property.  Replications of the same
underlying distribution by independent laboratories, for example,
should perhaps best be summarized by a distribution with a smaller
variance. In addition to the problem of assigning and justifying
the unequal weights, another problem with the weighted averages
consolidation is that even with normally-distributed input data,
this method generally produces a multimodal distribution, whereas
one might desire the consolidated output distribution to be of
the same general form as that of the input data -- normal, or at
least unimodal.

Another natural method of consolidating distributional data -- one
that does preserve normality, and is not idempotent -- is to average
the underlying input data. In this case, the consolidation  $T(P_1,\dots, P_n)$ 
is the
distribution of $(X_1+\cdots +X_n)/n$ (or a weighted average), where  
$\{X_i\}$ are independent
with distributions $\{P_i\}$, respectively. With this consolidation method,
the variance of $T(P_1,\dots, P_n)$ is strictly smaller (unless $\{X_i\}$ 
are all constant) than the maximum variance of the $\{P_i\}$, since
$\var (P)=(\var(P_1)+\cdots + \var (P_n))/n^2$.  Input data distributions
that differ significantly, however, may sometimes reflect a higher
uncertainty or variance. More fundamentally, in general this method
requires averaging of completely dissimilar data, such as results
from completely different experimental methods (see Section~6).

The method for consolidating distributional data presented below,
called the {\it conflation} of distributions, and designated with the
symbol ``\&" to suggest consolidation of $P_1$ {\it and} $P_2$, does not require ad
hoc weights, and the mean and/or variance of the conflation may
be larger or smaller than the means or variances of the input
distributions. In general, conflation automatically gives more
weight to input distributions arising from more accurate experiments,
i.e.\ distributions with smaller standard deviations. The conflation
of several distributions  has several other properties that may
be desirable for certain consolidation objectives -- conflation
minimizes the loss of Shannon information in consolidating the
combined information from $P_1,\dots, P_n$ into a single distribution $Q$, and is
both the unique minimax likelihood ratio consolidation and the
unique proportional likelihood ratio consolidation of the given
input distributions.

In addition, conflations of normal distributions are always normal,
and coincide with the classical weighted least squares method, hence
yielding best linear unbiased and maximum likelihood estimators. Many
of the other important classical families of distributions, including
gamma, beta, uniform, exponential, Pareto, LaPlace, Bernoulli, Zeta
and geometric families, are also preserved under conflation. The
conflation of distributions has a natural heuristic and practical
interpretation --  gather data (e.g., from independent laboratories)
sequentially and simultaneously, and record the values only when
the results (nearly) agree.

\section{Basic Definition and Properties of Conflations}

 Throughout this article,
$\bbn$ will denote the natural numbers, $\bbz$ the integers, $\bbr$ the real numbers,
$(a,b]$ the half-open interval $\{x\in\bbr:a<x\le b\}$, $\bbb$ 
the Borel subsets of $\bbr$, $\calp$ the set
of all real Borel probability measures,  $\delta_x$ the Dirac delta measure in
$\calp$ at the point $x$ (i.e., $\delta_x(B)  = 1$ if $x\in B$, and 
$= 0$ if $x\notin B$), $\|\mu\|$ the total
mass of the Borel sub-probability $\mu$,  $o(~)$ the standard ``little oh"
notation $o(a_n) = b_n$  if and only if $\lim_{n\to\infty} 
\frac{a_n}{ b_n}=0$,  
{\it a.c.}\ means absolutely continuous,  the {\it p.m.f.}\ of
$P$ is the probability mass function $(p(k)=P(\{k\}))$ if $P$ is discrete and {\it p.d.f.}\ is
the probability density function (Radon-Nikodyn derivative) of $P$ if
$P$ is a.c., $E(X)$ denotes the expected value of the random variable $X$,
$\psi_P$ the characteristic function of  $P\in\calp$  (i.e., $\psi_P(t)=
\int^\infty_{-\infty}e^{itx}dP(x)$), $I_A$ is the indicator function
of the set $A$ (i.e.\ $I_A(x)=1$ if $x\in A$ and $=0$ if $x\notin A)$,
$g\otimes h$  is the convolution $(g\otimes h)(t)=\int^\infty_{-\infty}
g(t-s)h(s)ds$ of $g$ and $h$, and
$A^c$ is the complement $\bbr\backslash A$ of the set $A$. 
For brevity,  $\mu ((a,b])$ will be written $\mu(a,b]$, $\mu(\{x\})$ as
$\mu(x)$, etc.

\begin{dfn}\label{dfn2.1}  
For  $P_1,\dots,P_n\in\calp$ and $j\in \bbn$,  
$\mu_j(P_1,\dots, P_n)$ is the purely-atomic $j$-dyadic sub-probability measure
$$
\mu_j(P_1,\dots, P_n)=\sum_{k\in\bbz}\prod^n_{i=1}P_i((k-1)2^{-j},k2^{-j}]\delta_{k2^{-j}}.
$$
\end{dfn}

\remk The choice of using half-open dyadic intervals closed
on the right, and of placing the mass in every dyadic interval at the
right end point is not at all important --- the results which follow
also hold if other conventions are used, such as decimal or ternary
half-open intervals closed on the left, with masses placed at the center.

\begin{exam}\label{exttwo} 
If $P_1$ is a Bernoulli distribution with parameter  $p=\frac13$ (i.e.\
$P=\frac{(2\delta_0+\delta_1)}{ 3}$) and $P_2$ is 
Bernoulli with parameter $\frac14$, then $\mu_j(P_1,P_2)=
\frac{(6\delta_{1/2}+\delta_1)}{ 12}$  for all $j\in\bbn$.  
\end{exam}

The next proposition is the basis for
the definition of conflation of general distributions below. Recall
(e.g. \cite[Theorem~4.4.1]{C}) that for real Borel sub-probability
measures $\{\nu_j\}$ and $\nu$, the following are equivalent: 
\begin{subequations} \label{tone}
\begin{align}
\nu_j& \to\nu \hbox{ vaguely as } j\to \infty;\label{tonea}\\  
\nu_j(a,b]&\to \nu(a,b] \hbox{ for all  $a<b$ in a dense set } D\subset\bbr;
\label{toneb}\\ 
\lim_{j\to\infty}\int f(x)d\nu_j(x)&=\int f(x)d\nu(x)
\label{tonec} \\ 
 &\qquad\hbox{ 
for all continuous $f$ that vanish at infinity.}\nonumber
\end{align}
\end{subequations}

\begin{thm}\label{thmtthree}  
For all $P,P_1,\dots P_n\in\calp$
\begin{itemize}
\item[\rm (i)] $\mu_{j+1}\left(\frac{a}{ 2^m},\frac{b}{ 2^m}\right]\le
\mu_j\left(\frac{a}{ 2^m},\frac{b}{2^m}\right]$ for all 
$j, m\in\bbn$, $j>m$; and all $a\le b$, $a,b\in\bbz$;
\item[\rm (ii)] $\mu_j(P_1,\dots, P_n)$ converges vaguely to a sub-probability
measure
\item[] $\mu_\infty (P_1,\dots, P_n)$;
\item[\rm (iii)] $\lim_{j\to\infty} \|\mu_j(P_1,\dots, P_n)\|=\|\mu_\infty
(P_1,\dots, P_n)\|$; and
\item[\rm (iv)] $\mu_\infty (P)=P$,  and 
$\mu_j(P)$ converges vaguely to $P$ as $j\to\infty$.  
\end{itemize}
\end{thm}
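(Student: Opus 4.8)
The plan is to treat part (i) as the combinatorial heart of the theorem and to derive (ii)--(iv) from it by standard measure-theoretic arguments. The single algebraic fact underlying (i) is the \emph{superadditivity of products}: for nonnegative reals $x_1,\dots,x_n,y_1,\dots,y_n$,
$$\prod_{i=1}^n(x_i+y_i)\ \ge\ \prod_{i=1}^n x_i+\prod_{i=1}^n y_i,$$
since expanding the left side yields $2^n$ nonnegative terms, two of which are exactly the products on the right. To prove (i) I would fix $j>m$ and note that refining the grid from level $j$ to level $j+1$ splits each interval $((k-1)2^{-j},k2^{-j}]$ into two halves carrying $P_i$-masses $x_i$ and $y_i$ (so $P_i((k-1)2^{-j},k2^{-j}]=x_i+y_i$). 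The refinement replaces the single atom of mass $\prod_i(x_i+y_i)$ at $k2^{-j}$ by two atoms, at the midpoint and at the right endpoint, of masses $\prod_i x_i$ and $\prod_i y_i$, whose sum is at most the parent mass. Because $a/2^m$ and $b/2^m$ lie on both grids (as $j>m$), the atoms of $\mu_{j+1}$ inside $(a/2^m,b/2^m]$ are precisely the children of the parents counted by $\mu_j$; summing the per-interval inequality over those parents gives $\mu_{j+1}(a/2^m,b/2^m]\le\mu_j(a/2^m,b/2^m]$.

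For (ii), first observe $\|\mu_j\|\le1$, since $\prod_i P_i(\cdot)\le P_1(\cdot)$ and the level-$j$ intervals partition $\bbr$. Part (i) shows that for each fixed dyadic interval $(a,b]$ the sequence $\mu_j(a,b]$ is eventually nonincreasing and bounded, hence convergent. By Helly's selection theorem $\{\mu_j\}$ has vaguely convergent subsequences with sub-probability limits; any two such limits $\mu,\mu'$ must agree on every dyadic $(a,b]$ whose endpoints are non-atoms of both (a dense collection, since atoms are countable), because on such intervals, by \eqref{toneb}, each limit equals the full-sequence limit supplied by (i). Thus all subsequential limits coincide, so the whole sequence converges vaguely to a single sub-probability measure $\mu_\infty$, using the equivalence \eqref{tone}.

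For (iii), applying the superadditivity to the \emph{entire} sum gives $\|\mu_{j+1}\|=\sum_k(\prod_i x_i+\prod_i y_i)\le\sum_k\prod_i(x_i+y_i)=\|\mu_j\|$, so $\|\mu_j\|$ decreases to a limit $L$, and lower semicontinuity of mass under vague convergence yields $\|\mu_\infty\|\le L$. The reverse inequality is the one delicate point, namely ruling out mass escaping to infinity. I would establish tightness by dominating tails: since $\prod_i P_i(\cdot)\le P_1(\cdot)$ and each atom lies within distance $2^{-j}\le1$ of its generating interval, one gets $\mu_j((-M,M]^c)\le P_1\big((-M{+}1,M{-}1]^c\big)$, a bound uniform in $j$ that tends to $0$ as $M\to\infty$. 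Tightness together with the vague convergence from (ii) then forces $\|\mu_j\|\to\|\mu_\infty\|$, giving (iii).

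Finally, (iv) is immediate once the $n=1$ discretization is examined: for $d=k/2^m$ and $j\ge m$ the intervals $((l-1)2^{-j},l2^{-j}]$ with $l2^{-j}\le d$ union to $(-\infty,d]$, so $\mu_j(P)(-\infty,d]=P(-\infty,d]$ exactly, and likewise $\mu_j(P)(a,b]=P(a,b]$ for all dyadic $a<b$ once $j$ is large. Hence the limit from (ii) agrees with $P$ on a dense family of intervals, which by \eqref{tone} forces $\mu_\infty(P)=P$ and $\mu_j(P)\to P$ vaguely. I expect the superadditivity step in (i) to be the conceptual crux and the tightness estimate in (iii) to be the only place demanding genuine measure-theoretic care; the remainder is bookkeeping with the dyadic grid and the standard uniqueness of vague limits.
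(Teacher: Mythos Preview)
Your argument tracks the paper's for parts (i), (ii), and (iv): the superadditivity inequality is exactly the paper's Lemma~\ref{lemtfour}, and the Helly-plus-uniqueness scheme for (ii) is the same. One small caution in (ii): you assert that the dyadic endpoints which are non-atoms of both subsequential limits form a dense set, but since the dyadics are themselves only countable this is not automatic; the paper is equally terse here and simply cites a uniqueness result from Chung. An explicit repair is to take arbitrary continuity points $a<b$ of both limits (a co-countable, hence dense, set) and squeeze $\mu_j(a,b]$ between the monotone dyadic limits furnished by (i).

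The substantive divergence is in (iii). The paper writes $\|\mu_j\|=\sum_{k\in\bbz}\mu_j(k,k+1]$ and interchanges limit and sum by dominated convergence, using the per-interval convergence $\mu_j(k,k+1]\to\mu_\infty(k,k+1]$ established in (ii). Your route through tightness---the tail bound $\mu_j((-M,M]^c)\le P_1\big((-M{+}1,M{-}1]^c\big)$ together with the standard fact that tight vague limits preserve total mass---is a legitimate alternative. The paper's version is more economical because it recycles (ii), while yours makes the ``no mass escapes to infinity'' mechanism explicit; the domination by $P_1$ driving your tightness estimate is essentially the same bound one would use as a dominator in the paper's DCT step.
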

The following simple
observation --- that the square of the sums of nonnegative numbers is always
at least as large as the sum of the squares --- will be used in the proof
of the theorem and several times in the sequel, and is recorded here
for ease of reference.  

\begin{lem}\label{lemtfour} 
For all $n\in\bbn$, all $a_{i,k}\ge 0$, and all $J\subset\bbn$,
$\prod^n_{i=1}\sum_{k\in J} a_{i,k}\ge \sum_{k\in J}\prod^n_{i=1} a_{i,k}$.
\end{lem}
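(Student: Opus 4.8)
The plan is to reduce the inequality to the completely elementary observation that, after expanding the product of the $n$ sums by the distributive law, the right-hand side consists of precisely the ``diagonal'' terms of that expansion. Explicitly, I would write
$$\prod_{i=1}^n \sum_{k\in J} a_{i,k}=\sum_{(k_1,\dots,k_n)\in J^n}\prod_{i=1}^n a_{i,k_i},$$
where $J^n$ denotes the set of all $n$-tuples $(k_1,\dots,k_n)$ with each $k_i\in J$. Since every summand $\prod_{i=1}^n a_{i,k_i}$ is nonnegative, discarding all of the off-diagonal tuples can only decrease the total; and retaining just the tuples with $k_1=\cdots=k_n=k$ yields exactly $\sum_{k\in J}\prod_{i=1}^n a_{i,k}$. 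That partial sum is therefore a lower bound for the full sum, which is the claimed inequality, obtained in one line once the expansion is justified.

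The one point that deserves any care is that $J$ may be infinite, so the displayed expansion is an identity between possibly infinite nonnegative series rather than a finite rearrangement. I would justify it in either of two ways. The first is to invoke Tonelli's theorem, viewing each sum as an integral against counting measure on $J$, so that the product of the sums equals an iterated integral, which equals the single integral over the product space $J^n$. The second, to stay self-contained, is induction on $n$: the inductive step reduces the whole statement to the two-factor case $\left(\sum_k b_k\right)\left(\sum_k c_k\right)=\sum_{k,l}b_k c_l\ge\sum_k b_k c_k$ for nonnegative $b_k,c_k$, applied with $b_k=a_{n,k}$ and $c_k=\prod_{i=1}^{n-1}a_{i,k}$ and combined with the inductive hypothesis.

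There is no genuine obstacle here; the only thing to watch is the bookkeeping in the infinite case. If any factor sum $\sum_{k\in J}a_{i,k}$ is $+\infty$, the left-hand side is $+\infty$ and the inequality holds trivially, so one may assume every factor sum is finite, whereupon all rearrangements of the nonnegative series are unconditionally valid and the expansion above holds verbatim. The special case singled out in the text, namely that the square of a sum of nonnegatives dominates the sum of the squares, is recovered by taking $n=2$ with $a_{1,k}=a_{2,k}$.
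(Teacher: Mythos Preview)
Your argument is correct: expanding the product by the distributive law and observing that the right-hand side is the diagonal part of the resulting nonnegative sum is exactly the right way to see this, and your handling of the infinite-$J$ case (via Tonelli or via induction, with the trivial case when some factor sum diverges) is careful and complete. The paper itself does not supply a proof of this lemma; it merely records it as a ``simple observation'' for later reference, so there is no alternative approach to compare against---your proof is the natural one the author evidently had in mind.
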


\noindent {\bf Proof of Theorem~\ref{thmtthree}.}   For (i),
note that for $j>m$
\begin{subequations}\label{ttwo}
\begin{align}
\mu_j\left(\frac{a}{ 2^m},\frac{b}{ 2^m}\right]&= \sum^{b2^{j-m-1}-1}_{k=a2^{j-m-1}}\mu_j
\left(\frac{k}{ 2^{j-1}}, 
\frac{k+1}{ 2^{j-1}}\right]\label{ttwoa}\\
&=\sum^{b2^{j-m}-1}_{k=a2^{j-m}}\mu_j
\left(\frac{k}{ 2^j},\frac{k+1}{ 2^j}\right]\nonumber
\end{align}
and
\begin{equation}
\mu_{j+1}\left(\frac{a}{ 2^m}\,, \frac{b}{ 2^m}\right] = 
\sum^{b2^{j-m}-1}_{k=a2^{j-m}}\mu_{j+1}\left(\frac{k}{ 2^j}, 
\frac{k+1}{ 2^j}\right]
\label{ttwob}\end{equation}\end{subequations}  
By the definition of $\mu_j$,
\begin{subequations}\label{tthree}
\begin{align}
\mu_j\left(\frac{k}{ 2^j},\frac{k+1}{ 2^j}\right] &= \prod^n_{i=1}P_i\left(
\frac{k}{ 2^j}, \frac{k+1}{ 2^j}\right]\label{tthreea}\\
&=\prod^n_{i=1}\left(P_i\left(\frac{2k}{ 2^{j+1}},
\frac{2k+1}{ 2^{j+1}}\right]
+P_i\left(\frac{2k+1}{ 2^{j+1}}, 
\frac{2k+2}{ 2^{j+1}}\right]\right)\nonumber
\end{align}
and
\begin{align}
\mu_{j+1}\left(\frac{k}{ 2^j}, \frac{k+1}{ 2^j}\right]&=\prod^n_{i=1}P_i\left(
\frac{2k}{2^{j+1}},\frac{2k+1}{ 2^{j+1}}\right]\label{tthreeb}\\
&\qquad +\prod^n_{i=1}
P_i\left( \frac{2k+1}{ 2^{j+1}},\frac{2k+2}{ 2^{j+1}}\right].\nonumber
\end{align}\end{subequations}
By Lemma~\ref{lemtfour}, \eqref{tthreea} and \eqref{tthreeb}
imply that
\begin{equation}\label{tfour}
\mu_{j+1}\left(\frac{k}{ 2^j}, \frac{k+1}{ 2^j}\right]\le\mu_j\left(
\frac{k}{ 2^j}, \frac{k+1}{ 2^j}\right]\quad
\hbox{for all } j>m,\; j,m\in\bbn,\; k\in\bbz.\end{equation} 
By \eqref{ttwoa} and \eqref{ttwob}, this implies (i).

For (ii), note that since every sequence of sub-probability measures
contains a subsequence that converges vaguely to a sub-probability
measure (e.g. \cite[Theorem~4.3.3]{C}), there exists a subsequence 
$\{\mu_{j_k} (P_1,\dots, P_n)\}$ of $\{\mu_j(P_1,\dots, P_j)\}$
and a sub-probability measure $\mu_\infty(P_1,\dots, P_n)$ 
so that\newline
 $\mu_{j_k}(P_1,\dots, P_n)$ converges vaguely to $\mu_\infty(P_1,\dots, P_n)$
as $k\to\infty$. Hence
by the uniqueness of vague limits (i.e.\ convergence on intervals from
different dense sets results in the same limit measure 
\cite[corollary to Theorem~4.3.1, p~86]{C}), (i) implies that
$$\lim_{j\to\infty}\mu_j\left( \frac{a}{ 2^m}, \frac{b}{ 2^m}\right]=
\mu_\infty\left(\frac{a}{ 2^m}, \frac{b}{2^m}\right],$$
which proves that $\mu_j(P_1,\dots, P_n)$ converges vaguely to 
$\mu_\infty(P_1,\dots, P_n)$.  

For (iii), note that 
\begin{align*}
\lim_{j\to\infty}\|\mu_j\|&=\lim_{j\to\infty}
\sum^\infty_{k=-\infty}\mu_j(k,k+1]\\
&=
\sum^\infty_{k=-\infty}\lim_{j\to\infty}\mu_j (k,k+1]=\sum^\infty_{k=-\infty}
\mu_\infty(k,k+1]=\|\mu_\infty\|\end{align*}
where
the second equality follows by the dominated convergence theorem, and
the third by the definition of $\mu_\infty$.  The special case $n=1$ of (iv) is 
immediate.\qqed

\begin{dfn}\label{dfntfive} 
$P_1,\dots, P_n\in\calp$ are ({\it mutually}) {\it compatible}  if 
$\|\mu_j\|>0$ for all $j\in\bbn$.  
\end{dfn}

Clearly every
normal distribution is compatible with every probability distribution,
every exponential distribution is compatible with every distribution
with support in the positive reals, and every geometric distribution is
compatible with every discrete distribution having any atoms in  $\bbn$. Even
though Theorem~\ref{thmtthree}  guarantees that $\mu_j(P_1,\dots, P_n)$ 
converges vaguely to a sub-probability
measure  $\mu_\infty(P_1,\dots, P_n)$ and that
$\lim_{j\to\infty} \|\mu_j(P_1,\dots, P_n\| = \|\mu_\infty(P_1,\dots, P_n)\|$, 
and compatibility implies that 
$\frac{\mu_j(P_1,\dots, P_n)}{\|\mu_j(P_1,\dots, P_n)\|}$
is a probability measure for all $j\in\bbn$, 
$\lim_{j\to\infty}\frac{\mu_j (P_1,\dots, P_n)}{\|\mu_j(P_1,\dots, P_n)\|}$
may not be a probability measure, as the next example shows.

\begin{exam}\label{extsix} 
Let  $P_1=\sum_{k\in N}2^{-k}\delta_k$, and
$P_2=\sum_{k\in\bbn} 2^{-k}\delta_{k+2^{-k}}$.
Then $P_1$ and $P_2$ are easily seen to be compatible, but 
$\lim_{j\to\infty}\frac{\mu_j (P_1,\dots, P_n)}{
\|\mu_j(P_1,\dots, P_n)\|}$ is the
zero measure, since for each $j\in\bbn$, the support of the probability measure 
$\frac{\mu_j(P_1,\dots, P_n)}{ \|\mu_j(P_1,\dots, P_n)\|}$ is
contained in $[j,\infty)$.  
\end{exam}

The next definition is the main definition in this paper.

\begin{dfn}\label{dfntseven}  
If  $\frac{\mu_j(P_1,\dots, P_n)}{\|\mu_j(P_1,\dots, P_n)\|}$ 
converges vaguely to a Borel probability measure $Q$
as $j\to\infty$, this limit $Q$ is called the {\it conflation} of $P_1,\dots, P_n$,
written $\&(P_1,\dots, P_n)$.
\end{dfn}

\begin{thm}\label{thmteight}  
The operation $\&$
is commutative and associative, that is, $\&(P_1,P_2)=\&(P_2,P_1)$ and 
$\&(P_1,\&(P_2,P_3))=\&(\&(P_1,P_2),P_3) =$\newline $\&(P_1,P_2,P_3)$.  
\end{thm}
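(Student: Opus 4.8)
The plan is to dispose of commutativity (and, more usefully, full symmetry) directly from the definition, and then to reduce associativity to a proportionality statement about the unnormalized limit measures $\mu_\infty$. First I would record that the atom masses defining $\mu_j(P_1,\dots,P_n)$ in Definition~\ref{dfn2.1} are the products $\prod_{i=1}^n P_i((k-1)2^{-j},k2^{-j}]$, which are symmetric functions of $P_1,\dots,P_n$. Hence $\mu_j$ is unchanged under any permutation of its arguments, and therefore so is its normalized vague limit. In particular $\mu_j(P_1,P_2)=\mu_j(P_2,P_1)$ for every $j$, giving $\&(P_1,P_2)=\&(P_2,P_1)$, and $\&(P_1,P_2,P_3)$ is invariant under all permutations of $(P_1,P_2,P_3)$; this total symmetry is what will let me deduce the ``left'' associativity identity from the ``right'' one at the end.

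Next I would record the working description of the conflation: whenever $\|\mu_\infty(P_1,\dots,P_n)\|>0$, Theorem~\ref{thmtthree}(ii),(iii) give $\int f\,d(\mu_j/\|\mu_j\|)\to\int f\,d\mu_\infty/\|\mu_\infty\|$ for every continuous $f$ vanishing at infinity, so the normalized sequence converges vaguely to the probability measure $\mu_\infty/\|\mu_\infty\|$; thus the conflation exists and equals $\mu_\infty(P_1,\dots,P_n)/\|\mu_\infty(P_1,\dots,P_n)\|$. Consequently, to prove $\&(P_1,\&(P_2,P_3))=\&(P_1,P_2,P_3)$ it suffices to show that the sub-probability measures $\mu_\infty(P_1,\&(P_2,P_3))$ and $\mu_\infty(P_1,P_2,P_3)$ are positive scalar multiples of one another, since normalizing then forces equality (and simultaneously shows that one conflation exists precisely when the other does).

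For the core step, write $I_{\ell,k}=((k-1)2^{-\ell},k2^{-\ell}]$, set $R=\&(P_2,P_3)$ and $c=\|\mu_\infty(P_2,P_3)\|>0$, so that $R=\mu_\infty(P_2,P_3)/c$. The tool is Theorem~\ref{thmtthree}(i): on any fixed dyadic interval $I_{\ell,k}$ the values $\mu_m(I_{\ell,k})$ decrease to $\mu_\infty(I_{\ell,k})$ as $m\to\infty$. Evaluating on $I_{\ell,k}$ and summing over the finitely many level-$m$ subintervals $I_{m,i}\subset I_{\ell,k}$ (for $m>\ell$),
\[
\mu_\infty(P_1,R)(I_{\ell,k})=\lim_{m\to\infty}\sum_{i}P_1(I_{m,i})R(I_{m,i})
=\frac1c\lim_{m\to\infty}\sum_{i}P_1(I_{m,i})\,\mu_\infty(P_2,P_3)(I_{m,i}),
\]
whereas $\mu_\infty(P_1,P_2,P_3)(I_{\ell,k})=\lim_m\sum_i P_1(I_{m,i})P_2(I_{m,i})P_3(I_{m,i})=\lim_m\sum_i P_1(I_{m,i})\,\mu_m(P_2,P_3)(I_{m,i})$, the last equality because the single level-$m$ interval $I_{m,i}$ carries exactly the atom of mass $P_2(I_{m,i})P_3(I_{m,i})$. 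The difference of the two inner sums equals $\sum_i P_1(I_{m,i})\big[\mu_m(P_2,P_3)(I_{m,i})-\mu_\infty(P_2,P_3)(I_{m,i})\big]\ge0$, and since $P_1(I_{m,i})\le1$ and the $I_{m,i}$ partition $I_{\ell,k}$ it is bounded above by $\mu_m(P_2,P_3)(I_{\ell,k})-\mu_\infty(P_2,P_3)(I_{\ell,k})$, which tends to $0$ by the monotone convergence just invoked. Hence $\mu_\infty(P_1,R)(I_{\ell,k})=\frac1c\,\mu_\infty(P_1,P_2,P_3)(I_{\ell,k})$ for every dyadic interval, and since the half-open dyadic intervals generate the Borel sets this yields $\mu_\infty(P_1,R)=\frac1c\,\mu_\infty(P_1,P_2,P_3)$ as measures.

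Finally, normalizing gives $\&(P_1,\&(P_2,P_3))=\&(P_1,P_2,P_3)$, and the total symmetry from the first step gives $\&(\&(P_1,P_2),P_3)=\&(P_3,\&(P_1,P_2))=\&(P_3,P_1,P_2)=\&(P_1,P_2,P_3)$. The main obstacle is exactly the point exploited in the core step: $R(I_{m,i})$ is proportional to $\mu_\infty(P_2,P_3)(I_{m,i})$, \emph{not} to the single-interval product $P_2(I_{m,i})P_3(I_{m,i})$, so associativity is not a termwise identity; the telescoping estimate bounding the discrepancy on a fixed interval by $\mu_m-\mu_\infty$, which vanishes by the monotonicity in Theorem~\ref{thmtthree}(i), is what forces the two limits to coincide.
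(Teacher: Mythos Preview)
Your argument is correct and considerably more careful than the paper's. The paper's entire proof is the single sentence ``Immediate from the definition of $\mu_\infty$ since multiplication of real numbers is commutative and associative,'' which disposes of commutativity exactly as you do but glosses over precisely the difficulty you isolate for associativity: for a fixed level-$m$ dyadic interval $I_{m,i}$, the value $\&(P_2,P_3)(I_{m,i})$ is proportional to $\mu_\infty(P_2,P_3)(I_{m,i})$, not to the single-interval product $P_2(I_{m,i})P_3(I_{m,i})=\mu_m(P_2,P_3)(I_{m,i})$, so $\mu_j(P_1,\&(P_2,P_3))$ is \emph{not} termwise a scalar multiple of $\mu_j(P_1,P_2,P_3)$. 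Your telescoping estimate---bounding the discrepancy on a fixed $I_{\ell,k}$ by $\mu_m(P_2,P_3)(I_{\ell,k})-\mu_\infty(P_2,P_3)(I_{\ell,k})\to 0$ via the monotonicity in Theorem~\ref{thmtthree}(i)---supplies exactly the missing ingredient and yields the clean identity $\mu_\infty(P_1,\&(P_2,P_3))=c^{-1}\mu_\infty(P_1,P_2,P_3)$, from which associativity follows by normalization. One small caveat worth noting: your argument presupposes $c=\|\mu_\infty(P_2,P_3)\|>0$, which need not hold even when $\&(P_2,P_3)$ exists (cf.\ Example~\ref{exthsix}, where the conflation is $\delta_0$ but the limiting mass is zero); the paper's one-liner does not address this edge case either, and the theorem is evidently intended to be read under the standing hypothesis that all the conflations involved exist in the strong sense $\|\mu_\infty\|>0$.
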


\begin{proof} Immediate from
the definition of $\mu_\infty$ since multiplication of real numbers is commutative
and associative.\end{proof}

\begin{exam}\label{extnine} 
Let $P_1$ be a Bernoulli distribution with parameter $p=\frac13$ and $P_2$ be
Bernoulli with parameter $\frac14$, as in Example~\ref{exttwo}. Then 
$\&(P_1,P_2)=\frac{(6\delta_0+\delta_1)}{ 7}$.
\end{exam}

\begin{exam}\label{extten} 
Let $P_1$ be $N(0,1)$ and $P_2$ be Bernoulli with parameter $p=\frac13$. 
Then it can easily be seen that  $\frac{\mu_j(P_1,P_2)}{\|\mu_j(P_1,P_2)\|}$
converges vaguely to 
$\& (P_1,P_2)=\delta_0 \left(\frac{2}{ (2+e^{-1/2})}\right)+\delta_1
\left( \frac{e^{-1/2}}{ (2+e^{-1/2})}\right)$, that is, to
the probability measure having the same atoms as the discrete measure,
weighted according to the product of the atom masses of $P_2$ and the magnitude
of the density of $P_1$ at $0$ and $1$.
\end{exam}

\section{Conflations of Discrete and of Absolutely Continuous Distributions}

In general, explicit representations of conflations are not known in
closed form. For large natural classes of distributions, however, such as
collections of discrete distributions with common atoms and collections
of a.c.\ distributions with overlapping densities, explicit forms of
the conflations are easy to obtain. The next two theorems give simple
and powerful characterizations of conflations in those two cases. Since
in practice input data can easily be approximated extremely closely by
discrete distributions with common atoms (e.g., by replacing each $P_i$  by the
dyadic approximation $\mu_j(P_i)$ above), or can be smoothed (e.g.\ by convolution with
a $U(-\ep,\ep)$ or a $N(0,\ep^2)$ variable), 
these two cases are of practical interest. The third
conclusion in the next two theorems also yield the heuristic and useful
interpretation of conflation described in the introduction.

\begin{thm}\label{thmthone} 
Let $P_1,\dots, P_n$ be discrete with p.m.f.'s $p_1,\dots, p_n$, respectively, and common
atoms $A$, where $\emptyset\ne A\subset\bbr$.  Then $\&(P_1,\dots, P_n)$ exists, and the
following are equivalent:
\begin{itemize}
\item[\rm (i)] $Q=\&(P_1,\dots, P_n)$
\item[\rm (ii)] $Q=\frac{\sum_{x\in A}\delta_x\prod^n_{i=1}p_i(x)}{
\sum_{y\in A}\prod^n_{i=1} p_i(y)}$
\item[\rm (iii)] $Q$ is the conditional distribution of $X_1$ given that $X_1=X_2=\cdots=
X_n$, where $X_1,\dots, X_n$ are independent r.v.'s with distributions $P_1,\dots, P_n$,
respectively.
\end{itemize}
\end{thm}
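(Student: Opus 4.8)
The plan is to reduce all three equivalences to a single computation: the explicit identification of the vague limit $\mu_\infty=\mu_\infty(P_1,\dots,P_n)$ guaranteed by Theorem~\ref{thmtthree}. Concretely, I would prove that
$$\mu_\infty=\sum_{x\in A}\Big(\prod_{i=1}^n p_i(x)\Big)\delta_x,$$
which has finite total mass $S:=\sum_{x\in A}\prod_{i=1}^n p_i(x)$. Since each $P_i$ is supported on the common atom set $A$, one has $P_i(B)=\sum_{x\in A\cap B}p_i(x)$ for every Borel $B$, so in particular for each level-$j$ dyadic interval $I_{j,k}=((k-1)2^{-j},k2^{-j}]$ the mass is $P_i(I_{j,k})=\sum_{x\in A\cap I_{j,k}}p_i(x)$. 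Because $\{I_{j,k}\}_{k\in\bbz}$ partitions $\bbr$ and $A$ is countable, I expect the bookkeeping of $\mu_j$ to separate cleanly into a ``diagonal'' part carried by the atoms of $A$ and an ``off-diagonal'' remainder that I must show vanishes as $j\to\infty$.

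First I would fix dyadic endpoints $a<b$ and expand, using the displayed formula for $P_i(I_{j,k})$,
$$\mu_j(a,b]=\sum_{a<k2^{-j}\le b}\ \prod_{i=1}^n\Big(\sum_{x\in A\cap I_{j,k}}p_i(x)\Big).$$
Applying Lemma~\ref{lemtfour} inside each interval rewrites each factorized product as the diagonal sum $\sum_{x\in A\cap I_{j,k}}\prod_{i=1}^n p_i(x)$ plus a nonnegative remainder consisting of the mixed terms $\prod_{i=1}^n p_i(x_i)$ over tuples $(x_1,\dots,x_n)\in(A\cap I_{j,k})^n$ that are not all equal. Since the level-$j$ intervals partition $(a,b]$, the diagonal contributions sum exactly to $\sum_{x\in A\cap(a,b]}\prod_{i=1}^n p_i(x)$, independently of $j$. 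The main obstacle is the remainder: I must show the total excess
$$E_j=\sum_k\ \sum_{\substack{(x_1,\dots,x_n)\in(A\cap I_{j,k})^n\\ \text{not all equal}}}\ \prod_{i=1}^n p_i(x_i)$$
tends to $0$. For this I would argue by dominated convergence on the index set $A^n$: a mixed tuple contributes at level $j$ only when all of its (at least two distinct) coordinates lie in a single interval of length $2^{-j}$, so once $2^{-j}$ is smaller than the positive spread $\max_{i,i'}|x_i-x_{i'}|$ the tuple drops out; thus each summand vanishes for large $j$, while the whole family is dominated by $\prod_{i=1}^n p_i(x_i)$, which is summable over $A^n$ with sum $\prod_{i=1}^n\big(\sum_{x\in A}p_i(x)\big)=1$. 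Hence $\mu_j(a,b]\to\sum_{x\in A\cap(a,b]}\prod_{i=1}^n p_i(x)$, and by the uniqueness of the vague limit in Theorem~\ref{thmtthree} this identifies $\mu_\infty$ as claimed and gives $\|\mu_\infty\|=S$.

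With $\mu_\infty$ in hand, existence and (i)$\Leftrightarrow$(ii) follow quickly. Since $A\neq\emptyset$ and $p_i(x)>0$ for $x\in A$, the total mass $S>0$; together with Theorem~\ref{thmtthree}(iii) this gives $\|\mu_j\|\to S>0$, so the $P_i$ are compatible and $\mu_j/\|\mu_j\|\to\mu_\infty/\|\mu_\infty\|$ vaguely. The limit $\mu_\infty/S$ is a probability measure (its total mass is $1$, so no mass escapes), hence by Definition~\ref{dfntseven} the conflation exists and equals $\mu_\infty/S=\big(\sum_{x\in A}\delta_x\prod_{i=1}^n p_i(x)\big)\big/\big(\sum_{y\in A}\prod_{i=1}^n p_i(y)\big)$, which is precisely statement (ii).

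Finally, (ii)$\Leftrightarrow$(iii) is an elementary discrete conditioning computation. Taking $X_1,\dots,X_n$ independent with laws $P_1,\dots,P_n$, independence gives $\Pr(X_1=\dots=X_n=x)=\prod_{i=1}^n p_i(x)$ for each $x\in A$, and summing over the disjoint events indexed by $A$ yields $\Pr(X_1=\dots=X_n)=S>0$, so the conditioning event has positive probability and the conditional law is well defined. Then $\Pr(X_1=x\mid X_1=\dots=X_n)=\prod_{i=1}^n p_i(x)\big/S$, which is exactly the p.m.f.\ of the measure in (ii); this establishes (ii)$\Leftrightarrow$(iii) and closes the chain. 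I expect the only genuinely delicate point in the whole argument to be the vanishing of the off-diagonal excess $E_j$, i.e.\ confirming that no mass leaks to non-common-atom locations in the vague limit.
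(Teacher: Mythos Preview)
Your overall strategy is sound and, once a slip is repaired, yields a clean alternative to the paper's argument. The slip is your claim that ``each $P_i$ is supported on the common atom set $A$,'' hence $P_i(B)=\sum_{x\in A\cap B}p_i(x)$. That is false in general: $A=\{x:\prod_i p_i(x)>0\}$ is the set of \emph{common} atoms, and each $P_i$ may have atoms outside $A$ (e.g.\ Example~\ref{exthtwo}, where $P_2$ is Poisson and has atoms at every nonnegative integer while $A=\{0,1,2\}$). Consequently your product expansion should run over $\prod_{i=1}^n A_i$ (with $A_i$ the atom set of $P_i$), not $A^n$, and the dominating sum is $\prod_i\sum_{x\in A_i}p_i(x)=1$, not $\prod_i\sum_{x\in A}p_i(x)$ (which need not equal $1$, though it is still $\le 1$). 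With this correction your decomposition is complete: the diagonal tuples $x_1=\cdots=x_n$ automatically lie in $A$, giving exactly $\sum_{x\in A\cap(a,b]}\prod_i p_i(x)$, and your dominated-convergence argument on the countable set $\prod_i A_i$ kills the off-diagonal excess $E_j$ exactly as you describe.

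By contrast, the paper controls the remainder by an $\epsilon$-truncation: it fixes $\epsilon>0$, chooses a \emph{finite} set $A_0$ with $P_i(D\cap A_0^c)<\epsilon$ for all $i$, and then uses Lemma~\ref{lemtfour} to bound $\mu_j$ on the complement of $\bigcup_{x\in A_0}D_{x,j}$ by $\epsilon^n$, while on the finitely many $D_{x,j}$ it passes to the limit directly. Your dominated-convergence argument is arguably more streamlined, avoiding the auxiliary finite set and the two-sided $\epsilon$ bookkeeping; the paper's approach, on the other hand, makes the ``mass cannot leak to non-atoms'' point explicit via the crude product bound $\mu_j(D\cap S_j^c)\le\prod_i P_i(D\cap S_j^c)$. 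Both routes culminate in the same identification of $\mu_\infty$, and the passages (i)$\Leftrightarrow$(ii) and (ii)$\Leftrightarrow$(iii) are handled essentially identically in your proposal and in the paper.
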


\begin{proof} Fix $P_1,\dots, P_n$ and note that by definition of atom, 
$p_i(x)>0$ for all
$i=1,\dots, n$ and all $x\in A$.  Fix $k_0\in\bbz$ and $j_0\in\bbn$, and let
$D=\left( \frac{k_0}{ 2^{j_0}}, \frac{k_0+1}{ 2^{j_0}}\right]$.  First it will be shown that
\begin{equation}\label{thone}
\mu_\infty(D)=\sum_{x\in A\cap D}\prod^n_{i=1} p_i(x).
\end{equation}  
For all $x\in\bbr$, $j\in\bbn$, let $D_{x,j}$ denote the unique dyadic interval 
$\left(\frac{k}{ 2^j}, \frac{k+1}{ 2^j}\right]$ containing $x$.  Note that
$D_{x,j}\searrow \{x\}$ 
as $j\to\infty$ so $P_i(D_{x,j})\searrow p_i(x)$ as $j\to\infty$ for all 
$i$ and all $x\in\bbr$.  

This implies
\begin{equation}\label{thtwo}
\lim_{j\to\infty}\prod^n_{i=1}P_i(D_{x,j})=\prod^n_{i=1} p_i(x)\quad
\hbox{ for all } x\in\bbr.\end{equation}    
Fix $\ep>0$.   Since $\{P_i\}$ are discrete, there exists a finite set 
$A_0\subset \bbr$ such that
\begin{equation}\label{ththree}
P_i(D\cap A^c_0)<\ep\qquad\hbox{for all }i\in\{1,\dots, n\}.
\end{equation}
Since $\prod^n_{i=1}p_i(x)=0$ for all $x\in A^c$, \eqref{ththree}  implies
\begin{align}\label{thfour}
\Bigl| \sum_{x\in A\cap D}\prod^n_{i=1}p_i(x)&-\sum_{x\in A_0\cap D}
\prod^n_{i=1} p_i(x)\Bigr| =
\sum_{x\in A\cap A^c_0\cap D}\prod^n_{i=1}p_i(x)\\
 &\le
\sum_{x\in A\cap A^c_0\cap D} p_1(x)
\le P_1(D\cap A^c_0)<\ep.\nonumber\end{align}
For each $j\in\bbn$, let $S_j=\bigcup_{x\in A_0}D_{x,j}$.  Then since $x\in D_{x,j}$ for
all $x$ and $j$, \eqref{ththree} implies 
$P_i(D\cap S^c_j)<\ep$ for all $i\in\{1,\dots, n\}$. 
Thus by definition of $\{\mu_j\}$ and Lemma~\ref{lemtfour}, 
\begin{equation}\label{thfive}
\mu_j(D\cap S^c_j)\le \prod^n_{i=1} P_i(D\cap S^c_j)<\ep^n\qquad
\hbox{for all }j\in\bbn. \end{equation} 
This implies that
\begin{align}\label{thsix}
\mu_j(D)&=\mu_j(D\cap S_j)+\mu_j(D\cap S^c_j)\\
&=\sum_{x\in D\cap A_0}
\mu_j(D_{x,j})+\mu_j(D\cap S^c_j)\nonumber\\
&= \sum_{x\in D\cap A_0}\prod^n_{i=1}P_i(D_{x,j})+\mu_j(D\cap S^c_j)\nonumber
\end{align}
where the second equality follows from the definitions of $S_j$ and
$D_{x,j}$. Since $x\in D_{x,j}$, \eqref{thsix}
implies
\begin{equation}\label{thseven}
\mu_j(D)\ge\sum_{x\in D\cap A_0}\prod^n_{i=1} P_i(D_{x,j})\ge\sum_{x\in D\cap A_0}
\prod^n_{i=1} p_i(x).\end{equation}
By \eqref{thsix}, \eqref{thtwo}  and \eqref{thfive},
\begin{equation}\label{theight}
\mu_j(D)\le \sum_{x\in A_0\cap D}\prod^n_{i=1} p_i(x)+\ep^n
+\ep\quad\hbox{for sufficiently large } j.\end{equation}
By \eqref{thseven} and \eqref{theight}, 
$|\mu_j(D)=\sum_{x\in A_0\cap D}\prod^n_{i=1}p_i(x)|
\le\ep+\ep^n$, so by \eqref{thfour},\newline
 $|\mu_j(D)-\sum_{x\in A\cap D}\prod^n_{i=1}
p_i(x)|<2\ep+\ep^n$.  Since $\ep >0$ was arbitrary and since $\mu_j\to \mu_\infty$, this
implies \eqref{thone}. Since $D$ was arbitrary, \eqref{thone} implies that 
$\|\mu_\infty(P_1,\dots, P_n)\|=\sum_{x\in A}\prod^n_{i=1}p_i(x)$, which proves
that $\&(P_1,\dots, P_n)$ exists. The equivalence of (i) and (ii) 
follows since  
$\&(P_1,\dots, P_n)=\frac{\mu_\infty}{\|\mu_\infty\|}$ and since
the measures of dyadic intervals  $D$ determine $\mu_\infty$. The equivalence of (ii) and
(iii) follows immediately from the definition of conditional probability.
\end{proof}

\begin{exam}\label{exthtwo} 
If $P_1$ is binomial with parameters $n=2$ and $p=\frac13$ and $P_2$ is Poisson with
parameter $\lam=5$,
 then $\&(P_1,P_2)$ is discrete
with atoms only at 0, 1  and 2 --- specifically, 
$\&(P_1,P_2)=\frac{8\delta_0}{ 73}
+\frac{40\delta_1}{ 73} + \frac{25\delta_2}{ 73}$.  
\end{exam}

\remk It should be noted that if the input distributions are
discrete and have no common atoms, then the conflation does not
exist. This could happen if, for example, the underlying experiments
were designed to estimate Avogadro's number (theoretically a 24-digit
integer), and the results were given as exact integers. In practice,
however, Avogadro's number is known only to seven decimal places,
and if the results of the experiments were reported or recorded to
eight or nine decimal places of accuracy, then there would almost
certainly be common values, and the conflation would be well defined
and meaningful. (Restriction to the desired decimal accuracy could
be done by the experimenter, or afterwards, e.g.\ converting each
input $P_i$ to $\mu_{20}(P_i)$ as mentioned above.)

The analog
of Theorem~\ref{thmthone}
 for probability distributions with densities requires an
additional hypothesis on the density functions, for the simple reason that
the product of a finite number of p.m.f.'s is always the mass function of
a discrete sub-probability measure (i.e., is always summable), but the
product of a finite number of p.d.f.'s may not be the density function
of a finite a.c.\ measure (i.e., may not be integrable), as will be seen
in Example~\ref{exthsix} below. 

The algebraic and Hilbert space properties of
normalized products of density functions have been studied for special
classes of a.c. distributions with p.d.f.'s with compact support that
are bounded from above and bounded from below away from zero \cite{Aitc,
EDP}; products of p.m.f.'s and p.d.f.'s have been used in certain
pattern-recognition problems \cite{Lu}; and the ``log opinion poll" method for
combining probability distributions \cite{GZ} is an a.c.\ distribution with 
normalized density $\prod f^{w_i}_i$, 
which is similar in structure, but is idempotent since the weights sum to one.

\begin{thm}\label{thmththree}  
Let $P_1,P_2,\dots, P_n$ be absolutely continuous with densities
$f_1,\dots, f_n$ satisfying $0<\int^\infty_{-\infty}\prod^n_{i=1} f_i(x)dx<\infty$.
Then $\&(P_1,\dots, P_n)$ exists and the following are equivalent:
\begin{itemize}
\item[\rm (i)] $Q=\&(P_1,\dots, P_n)$;
\item[\rm (ii)] $Q$ is absolutely continuous with density
$f(x)=\frac{\prod^n_{i=1}f_i(x)dx}{\int^\infty_{-\infty}\prod^n_{i=1}f_i(y) dy}$;
\item[\rm (iii)] $Q$ is the (vague) limit, as $\ep\searrow 0$, of the conditional
distribution of $X_1$ given that $|X_i-X_j|<\ep$ for all $i,j\in \{1,\dots, n\}$,
where $X_1,\dots, X_n$ are independent r.v.'s with distributions $P_1,\dots, P_n$,
respectively.
\end{itemize}
\end{thm}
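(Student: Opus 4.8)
The plan is to show directly that $\mu_j(P_1,\dots,P_n)/\|\mu_j(P_1,\dots,P_n)\|$ converges vaguely to the probability measure $Q$ with density $f=h/\!\int h$, where $h=\prod_{i=1}^n f_i$ has total mass $\int_{-\infty}^{\infty} h\in(0,\infty)$ by hypothesis. The feature that makes this genuinely different from the discrete case of Theorem~\ref{thmthone} is that here $\mu_\infty=\lim_j\mu_j$ is the \emph{zero} measure once $n\ge 2$: writing $D_{k,j}=(k2^{-j},(k+1)2^{-j}]$, one has $\mu_j(D_{k,j})=\prod_i\int_{D_{k,j}}f_i$, which is of order $2^{-jn}$, so $\|\mu_j\|\to0$ and one cannot normalize $\mu_\infty$ as before. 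Instead I would rescale and study $\nu_j:=2^{j(n-1)}\mu_j$. Setting $E_jf_i(x):=2^j\int_{D_{x,j}}f_i$ (the level-$j$ dyadic block average), a direct computation gives, for every dyadic interval $D$, the identity $\nu_j(D)=\int_D g_j\,dx$ with $g_j:=\prod_{i=1}^n E_jf_i$ constant on each level-$j$ interval. Since $\mu_j/\|\mu_j\|=\nu_j/\|\nu_j\|$, it suffices to prove $\nu_j(D)\to\int_D h$ for every dyadic $D$.

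The two soft ingredients come first. By the Lebesgue differentiation theorem $E_jf_i\to f_i$ a.e., hence $g_j\to h$ a.e., and Fatou's lemma yields the lower bound $\liminf_j\nu_j(E)=\liminf_j\int_E g_j\ge\int_E h$ for \emph{every} Borel set $E$. Applying this to $E=D$, to $E=D^c$, and to $E=\bbr$, I would reduce the entire theorem to the single matching bound on the total mass, $\limsup_j\|\nu_j\|=\limsup_j\int_{-\infty}^{\infty}g_j\le\int_{-\infty}^{\infty}h$: indeed, if $a_j+b_j\to c$ with $\liminf a_j\ge a$, $\liminf b_j\ge b$ and $a+b=c$, then $a_j\to a$ and $b_j\to b$, so $\nu_j(D)+\nu_j(D^c)=\|\nu_j\|\to\int h$ together with the two Fatou bounds forces $\nu_j(D)\to\int_D h$ for each dyadic $D$. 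Granting this, $\nu_j/\|\nu_j\|$ converges on the dense family of dyadic intervals to the corresponding values of $(h\,dx)/\!\int h$, and the total mass tends to $1$ because $\int h<\infty$ means no mass escapes to infinity; thus the vague limit $Q$ exists, is a probability measure, and has density $h/\!\int h$. This establishes existence and the equivalence of (i) and (ii).

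The hard part is the upper bound $\limsup_j\int_{-\infty}^{\infty}\prod_i E_jf_i\le\int_{-\infty}^{\infty}\prod_i f_i$, and I expect it to be the principal obstacle. One cannot argue pointwise in $j$: for $n=2$ the defect $\int g_j-\int h$ equals a sum of negative local covariances, so $\int g_j>\int h$ is perfectly possible at finite $j$ (the monotonicity underlying Theorem~\ref{thmtthree}(i) and Lemma~\ref{lemtfour} does not survive the rescaling by $2^{j(n-1)}$). Nor can one invoke the $L^2$-contraction $\|E_jf_i\|_2\le\|f_i\|_2$, since $\int\prod f_i<\infty$ does not force the individual $f_i$ into $L^2$. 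The route I would take is truncation: with $b_i:=\min(f_i,M)\,I_{[-R,R]}$, the bounded compactly supported parts satisfy $E_jb_i\le\|b_i\|_\infty$ and $E_jb_i\to b_i$ a.e., so $g_j$ built from the $b_i$ is bounded and compactly supported and ordinary dominated convergence gives $\int\prod_iE_jb_i\to\int\prod_i b_i\le\int h$. Writing $\int g_j=\int\prod_iE_jb_i+\int\big(\prod_iE_jf_i-\prod_iE_jb_i\big)$, the remaining task is to show that the nonnegative tail term (the cross terms carrying at least one factor $E_j(f_i-b_i)$) is small uniformly in $j$ as $M,R\to\infty$; here the joint integrability $\int h<\infty$ must be used in an essential way, since the crude estimate that replaces the bounded factors by $\|b_i\|_\infty$ is too lossy (it leaves $M^{\,n-1}\int(f_i-M)_+$, which need not vanish). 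This uniform tail estimate is the crux; once it is in place, $\limsup_j\int g_j\le\int h+o_M(1)$ and letting $M,R\to\infty$ closes the bound.

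Finally, for the equivalence of (ii) and (iii) I would compute the conditional density directly. The joint density of $(X_1,\dots,X_n)$ is $\prod_i f_i(x_i)$, and conditioning on $A_\ep=\{|X_i-X_j|<\ep\ \forall\,i,j\}$ makes the density of $X_1$ proportional, up to the relevant order in $\ep$, to $f_1(x)\prod_{i\ge2}\int_{x-\ep}^{x+\ep}f_i$, so that both this numerator and the normalizing constant $P(A_\ep)$ carry a common factor of order $\ep^{\,n-1}$. By the Lebesgue differentiation theorem $\ep^{-1}\int_{x-\ep}^{x+\ep}f_i\to 2f_i(x)$ for a.e.\ $x$, so the $\ep^{\,n-1}$ factors cancel and the conditional density converges to $\prod_i f_i(x)/\!\int\prod_i f_i$, exactly the density in (ii); dominated convergence (again invoking $\int h<\infty$) justifies the passage to the limit in the normalizing constant. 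This is the rigorous form of the heuristic of recording a value only when the independent results nearly agree.
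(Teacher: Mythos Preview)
Your framework---rescaling to $\nu_j=2^{j(n-1)}\mu_j$, writing $\nu_j(D)=\int_D g_j$ with $g_j=\prod_iE_jf_i$, and reducing (i)$\Leftrightarrow$(ii) via Fatou on $D$ and on $D^c$ to the single total-mass statement $\int g_j\to\int h$---is a genuinely different route from the paper's. The paper never analyzes $g_j$ for general densities. Instead it first treats the case where each $f_i$ is a \emph{simple function on a common dyadic partition of scale $j_0$}: for $j\ge j_0$ one computes $\mu_j/\|\mu_j\|$ explicitly as a weighted sum of discrete uniform measures on the blocks, and the vague limit is read off directly as the normalized product density. The extension to arbitrary densities is then dismissed in one line (``the standard argument of approximating below by simple functions, and using monotone convergence''). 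So the paper trades your upper bound for a limit-interchange ($s_i^{(m)}\nearrow f_i$, then swap $m\to\infty$ with $j\to\infty$) that it does not spell out either; your approach is more candid about where the analytic work sits, while the paper's buys an exact finite computation at the base step.

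That said, the gap you flag is real and your proposal does not close it. You correctly note that truncating to $b_i=\min(f_i,M)I_{[-R,R]}$ leaves cross terms bounded only by quantities of size $M^{\,n-1}\!\int(f_i-M)_+$, which need not vanish as $M\to\infty$, and you offer no alternative mechanism for controlling them uniformly in $j$. Without that, $\limsup_j\int g_j\le\int h$ is unproved, and with it the convergence $\nu_j(D)\to\int_Dh$. One structural fact you have not exploited is the averaging identity $\int(E_jf_1)\,\phi=\int f_1\,(E_j\phi)$ for $\phi$ constant on level-$j$ intervals; for $n=2$ this collapses $\int g_j$ to $\int f_1\cdot E_jf_2$, reducing the question to a single martingale limit against a fixed weight, and an induction on $n$ along these lines may be more tractable than uniform tail bounds---though it still requires care since neither $f_1$ nor $f_2$ need be bounded. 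Your treatment of (ii)$\Leftrightarrow$(iii) is essentially the paper's: compute the conditional density of $X_1$ given near-coincidence, use Lebesgue differentiation for the pointwise limit, and pass to the limit in the normalizing constant via the truncation $f_i^M=\min(f_i,M)$ and an interchange of the $\ep\to0$ and $M\to\infty$ limits, which the paper writes out explicitly.
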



\begin{proof}  First suppose that the densities $\{f_i\}$ are nonnegative
simple functions on half-open dyadic intervals	$(a,b]$, 
$a,b\in \{\frac{k}{ 2^j}\,: k\in \bbz,j\in\bbn\}$.  Without loss of generality
(splitting the intervals if necessary), there exists $j_0\in\bbn$  and a finite set
$K\subset \bbn$ such that
\begin{equation}\label{thnine}
f_i=\sum_{k\in K}c_{j,k}I_{D_k}\quad\hbox{for all } i=1,\dots, n
\end{equation}
where $c_{i,k}\ge 0$ for all $i,k;$ and $D_k$ are disjoint intervals
$\left(a_k,a_k+\frac{1}{ 2^{j_0}}\right]$, 
$a_k=\frac{k}{ 2^{j_0}}$, $k\in K$.  
Let $\pi_k=\prod^n_{i=1}c_{i,k}$ for all $k\in K$, and note that
the compatibility of $P_1,\dots, P_n$ implies that $\sum_{k\in K}\pi_k>0$.
It will now be shown that $\&(P_1,\dots, P_n)$ is absolutely continuous with
density $f$, where
\begin{equation}\label{thten}
f(x)=\frac{\prod^n_{i=1}f_i(x)}{\int^\infty_{-\infty}\prod^n_{i=1} f_i(s)ds}\, 
=\frac{2^{j_0}\sum_{k\in K}\pi_kI_{D_k}}{\sum_{k\in K}\pi_k}\qquad
\hbox{ a.s.} \end{equation} 
Fix $m\in \bbn$, and let $a_{k,s}=a_k+\frac{s}{ 2^{j_0+m}}$.  First note since
$f_i=c_{i,k}$ a.s.\ on $D_k$ for each $i$ and $k$,
\begin{align}\label{theleven}
\pi_k&=(2^{j_0+m})^n\prod^n_{i=1}P_i (a_{k,s-1},a_{k,s}]\\
&\qquad\qquad \hbox{for all }
s=1,\dots, 2^m;\; m\in \bbn;\; \hbox{ and } k\in K.\nonumber
\end{align} 
By \eqref{theleven}, and the definitions of $\{D_k\}$ and $\{\mu_j\}$,
\begin{align}\label{thtwelve}
\mu_{j_0+m}&=\sum_{k\in K}\sum^{2^m}_{s=1}\prod^n_{i=1}P_i(a_{k,s-1},a_{k,s}]
\delta_{a_{k,s}}\\
& = \sum_{k\in K}\sum^{2^m}_{s=1}\frac{\pi_k}{ 2^{(j_0+m)n}}
\delta_{a_{k,s}}=\frac{1}{ 2^{(j_0+m)n}}\sum_{k\in K}\pi_k
\sum^{2^m}_{s=1}\delta_{a_{k,s}}.\nonumber\end{align}
Since $m$, $j_0$ and $n$ are fixed, and since 
$\|\sum^{2^m}_{s=1}\delta_{a_{k,s}}\|=2^m$, \eqref{thtwelve}  implies that
\begin{equation}\label{ththirteen}
\frac{\mu_{j_0+m}}{\|\mu_{j_0 +m}\| }=
\frac{\sum_{k\in K}\pi_k\sum^{2^m}_{s=1}\delta_{a_{k,s}}}{
2^m\sum_{k\in K}\pi_k}=
\frac{\sum_{k\in K} \pi_k \frac{1}{ 2^m}\sum^{2^m}_{s=1}\delta_{a_{k,s}}}{
\sum_{k\in K}\pi_k}\end{equation}
But since $\frac{1}{ 2^m}\sum^{2^m}_{s=1}\delta_{a_{k,s}}$  converges vaguely to the
probability measure uniformly distributed on $D_k$ for each $k\in K$, and
$\frac{\mu_{j_0+m}}{ \|\mu_{j_0+m}\|}$ converges vaguely to $\& (P_1,\dots, P_n)$ as
$m\to\infty$, 
\eqref{ththirteen} implies \eqref{thten}. 
This completes the proof that $\&(P_1,\dots, P_n)$ exists and (i) and (ii) are equivalent
when the densities are simple functions on dyadic intervals. For
the general case, use the standard method to extend this
result to general simple functions, and then, since densities
are a.s. nonnegative, extend this to finite collections of
densities whose product is integrable, via the standard argument
of approximating below by simple functions, and using monotone
convergence.  

For the equivalence of (ii) and (iii), for every $\ep>0$
let $P_{1,\ep}$ denote the conditional distribution of $X_1$ given 
$\{|X_i-X_j|<\ep$ for all $i,j\in \{1,\dots, n\}\}$, that is, for all
Borel sets $A$,
$$P_{1,\ep}(A)=\frac{P(X_1\in A\hbox{ and } |X_i-X_j|<\ep\hbox{ for all } i,j\in \{1,\dots, n\})
}{ P(|X_i-X_j|<\ep\hbox{ for all } i,j\in \{1,\dots, n\})}\,,$$
where the denominator is always strictly positive since by hypothesis
\newline $\int\prod^n_{i=1}f_i(x)dx>0$.
Clearly, $P_{1,\ep}$ is absolutely continuous with conditional density $f_{1,\ep}$,
where the independence of the $\{X_i\}$ implies that
\begin{equation}\label{thfourteen}
f_{1,\ep}(x)=\frac{f_1(x)\left(\prod^n_{i=2}\int^{x+\ep}_{x-\ep}f_i(z)dz\right)
}{
\int^\infty_{-\infty}f_1(y)\left(\prod^n_{i=2}\int^{y+\ep}_{y-\ep}
f_i(z)dz\right)dy}.\end{equation}
Next note that by the definition of derivative and integral,
\begin{equation}\label{thfifteen}
\lim_{\ep \to 0} f_1(x)\prod^n_{i=2}(2\ep)^{-1}\int^{x+\ep}_{x-\ep}
f_i(z)dz=\prod^n_{i=1}f_i(x).\end{equation}
Letting $f^M_i=\min\{f_i,M\}$ for all $M\in\bbn$, and all $i\in \{1,\dots, n\}$, calculate
\begin{align}\label{thsixteen}
&\lim_{\ep \to 0} \int f_1(y)\left(\prod^n_{i=2}(2\ep)^{-1}\int^{y+\ep}_{y-\ep}
f_i(z)dz\right)dy\\
&\qquad =\lim_{\ep\to 0}\lim_{M\to \infty}\int f^M_1 (y)
\left((2\ep)^{-1}\prod^n_{i=2}\int^{y+\ep}_{y-\ep}f^M_i(z)dz\right)dy\nonumber\\
&\qquad =\lim_{M\to\infty}\lim_{\ep\to 0}\int f^M_1(y)\left(\prod^n_{i=2}(2\ep)^{-1}
\int^{y+\ep}_{y-\ep} f^M_i(z)dz\right)dy\nonumber\\
&\qquad =
\lim_{M\to\infty}\int \prod^n_{i=1}f^M_i(y)dy=\int\prod^n_{i=1}
f_i(y)dy,\nonumber\end{align}
where the first equality follows from the monotone convergence
theorem, the second since the convergence of 
$\lim_{\ep\to 0} \int f^M_1 (y)\left(\prod^n_{i=2}(2\ep)^{-1}
\int^{y+\ep}_{y-\ep} f^M_i(z)dz\right)dy$ is uniform in $M$, the third
by \eqref{thfifteen}  and the bounded convergence theorem since the integrand
is bounded by $M^n$, and the last by the dominated convergence theorem
since by hypothesis,  
$$\int^\infty_{-\infty}\prod^n_{i=1} f_i(x)dx<\infty.$$
Thus by \eqref{thfourteen}, \eqref{thfifteen}, and \eqref{thsixteen}, 
$$\lim_{\ep \to 0} f_{1,\ep}(x)=\frac{\prod^n_{i=1}f_i(x)}{ \int\prod^n_{i=1}
f_i(y)dy},$$ 
proving the equivalence of (ii) and (iii).\end{proof}

\begin{exam}\label{exthfour} 
Suppose  $P_1$ is $N(0,1)$ and $P_2$ is exponentially distributed with mean 1.
Then $\&(P_1,P_2)$ is a.c.\ with p.d.f.\ $f(x)$ proportional to
$e^{-x^2/2}e^{-x}=e^{1/2}e^{-(x+1)^2/2}$ for $x>0$, 
which is simply the standard
 normal shifted to the left one unit, and conditioned to be nonnegative.
\end{exam}

\begin{exam}\label{exthfive} 
Suppose $P_1$ and $P_2$ are both standard Cauchy distributions. Then
neither $P_1$ nor $P_2$ have finite means, but by Theorem~\ref{thmththree}, 
$\&(P_1,P_2)$ is a.c.\ with
density $f(x)=c(1+x^2)^{-2}$ for some $c>0$, 
and since  $\int^\infty_{-\infty} x^2(1+x^2)^{-2}dx<\infty$,
$\&(P_1,P_2)$ has both finite mean and variance. In particular,
the conflation of Cauchy distributions is not Cauchy, in contrast to
the closure of many classical families under conflation 
(Theorem~\ref{thmsvone}
below). This example also shows that the classes of stable and infinitely
divisible  distributions are not closed under conflation.
\end{exam}

In general, the conflation of a.c.\ distributions, even an
a.c.\ distribution with itself, may not be a.c., let alone have a density
proportional to the product of the densities.

\begin{exam}\label{exthsix} 
Let $P_1=P_2$ be a.c.\ with p.d.f.\ $f(x)=(4x)^{-1/2}$ for
$0<x<1$ (and zero elsewhere). Then $f_1(x)f_1(x)=\frac1{4x}$ is not
integrable, and no scalar multiple is a p.d.f.  However, the conflation
$\&(P_1,P_2)$ does exist, and by showing that the normalized mass of $\mu_j$ is moving to
the left as  $j\to\infty$ it can be seen that 
$\&(P_1,P_2)=\delta_0$, the Dirac delta measure at zero
(in particular, the conflation is not even a.c.).
\end{exam}

The characterization of the conflation of a.c. distributions as
the normalized product of the density functions yields another
characterization of conflations of a.c. distributions, an analog of the
classical convolution theorem in Fourier analysis \cite{B}.

Recall that $g\otimes h$ is the convolution of $g$ and $h$.

\begin{thm}[Convolution theorem for conflations]\label{thmthseven} 
Let $P_1,P_2,\dots, P_n$ be compatible and a.c.\ with densities $\{f_i\}$ and
characteristic functions $\{\psi_i\}$.  If \/
$0<\int^\infty_{-\infty}\prod^n_{i=1}
f_i (x)dx<\infty$ and $\{\psi_i\}$ are $L^1$, then $\& (P_1,\dots, P_n)$ exists
and is the unique a.c.\ probability distribution with characteristic function
$\psi_{\&(P_1,\dots, P_n)}=\frac{\psi_1\otimes\psi_2\otimes\cdots\otimes \psi_n}{
 (2\pi)^{n-1}\int^\infty_{-\infty}\prod^n_{i=1} f_i(x)dx}$.
\end{thm}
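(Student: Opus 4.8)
The plan is to leverage Theorem~\ref{thmththree}, which already establishes that $\&(P_1,\dots,P_n)$ exists and is a.c.\ with density $f(x)=\prod_{i=1}^n f_i(x)\big/\int_{-\infty}^\infty\prod_{i=1}^n f_i(y)\,dy$, and then simply compute the characteristic function of this known density via Fourier analysis. The whole content of the theorem reduces to identifying the Fourier transform of a (normalized) pointwise product of densities with the (normalized) convolution of their individual characteristic functions. The normalization constant and the factor $(2\pi)^{n-1}$ will fall out of the standard relationship between pointwise multiplication and convolution under the Fourier transform.

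First I would recall the basic duality: if $g$ has Fourier transform $\hat g$, then the transform of a product $g_1 g_2$ is $(2\pi)^{-1}$ times the convolution $\hat g_1 \otimes \hat g_2$. Here each $\psi_i(t)=\int e^{itx} f_i(x)\,dx$ is exactly the Fourier transform (with the paper's sign convention) of $f_i$, so by induction on $n$ the transform of the \emph{unnormalized} product $\prod_{i=1}^n f_i$ is $(2\pi)^{-(n-1)}\,\psi_1\otimes\psi_2\otimes\cdots\otimes\psi_n$. The chain of $n-1$ pairwise products accounts precisely for the $n-1$ factors of $(2\pi)^{-1}$. Dividing by the normalizing constant $\int_{-\infty}^\infty\prod_{i=1}^n f_i(x)\,dx$ to pass from the product to the density of $\&(P_1,\dots,P_n)$ then yields the claimed formula for $\psi_{\&(P_1,\dots,P_n)}$.

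The technical steps requiring care are the hypotheses that justify each inversion and each application of the convolution rule. The assumption that the $\{\psi_i\}$ are $L^1$ is what I would use to guarantee that the iterated convolutions $\psi_1\otimes\cdots\otimes\psi_k$ are well defined and bounded, and that Fourier inversion is valid so that one may pass freely between a density and its characteristic function. The assumption $0<\int\prod f_i<\infty$ guarantees (via Theorem~\ref{thmththree}) that the normalized product is genuinely a probability density, so that $\psi_{\&(P_1,\dots,P_n)}$ is the characteristic function of an honest probability measure; the strict positivity ensures the normalizing constant is nonzero. Uniqueness is immediate, since a probability distribution is determined by its characteristic function.

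The main obstacle I anticipate is the careful bookkeeping in the inductive step: establishing rigorously, under the $L^1$ hypothesis on the $\{\psi_i\}$, that the Fourier transform of the pointwise product equals the normalized iterated convolution, with Fubini applied to interchange the order of integration in the multiple convolution. One must verify that the intermediate products $f_1\cdots f_k$ and their transforms remain integrable enough to iterate the duality and to invoke the inversion theorem at each stage, rather than assuming the clean $L^1$--$L^2$ situation. Once this Fourier-analytic identity is in hand, matching it against the density formula from Theorem~\ref{thmththree} completes the proof.
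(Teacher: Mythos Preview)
Your proposal is correct and follows essentially the same route as the paper: invoke Theorem~\ref{thmththree} to identify the density of $\&(P_1,\dots,P_n)$ as the normalized product $\prod f_i$, then use the $L^1$ hypothesis on the $\psi_i$ together with Fubini and Fourier inversion to relate the characteristic function of that product to the iterated convolution $\psi_1\otimes\cdots\otimes\psi_n$, picking up the factor $(2\pi)^{-(n-1)}$. The paper carries out the bare-hands computation for $n=2$ (starting from the convolution side and unwinding to $2\pi\int f_1f_2\,e^{itx}\,dx$) and then appeals to induction via associativity, which is exactly your plan stated in the reverse direction; the inductive bookkeeping you flag is handled because each $\psi_i$ is bounded and $L^1$, so the partial convolutions remain bounded and $L^1$, and positivity of the intermediate normalizers follows from $\int\prod_{i=1}^n f_i>0$.
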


\begin{proof}  The proof
will be given only for the case $n=2$; the general case follows easily by
induction and Theorem~\ref{thmteight}. 
Suppose $\psi_1$ and $\psi_2$ are $L^1$ and 
$0<\int^\infty_{-\infty} f_1(x)f_2(x)dx<\infty$.  Then
\begin{align*}
(\psi_1\otimes\psi_2)(t)&=\int^\infty_{-\infty}\psi_2(s)\psi_1(t-s)ds=
\int^\infty_{-\infty}\psi_2(s)\left[\int^\infty_{-\infty}e^{i(t-s)x}f_1(x)dx\right]ds\\
&=\int^\infty_{-\infty} f_1(x)e^{itx}\left[\int^\infty_{-\infty}
\psi_2(s)e^{-isx}ds\right]dx\\
&= \int^\infty_{-\infty} 2\pi f_1(x)f_2(x)e^{itx}dx=2\pi\psi_{\&(P_1,P_2)}(t)
\int^\infty_{-\infty} f_1(x)f_2(x)dx\end{align*}
where the first equality follows from the definition of convolution;
the second by definition of  $\psi_1$; the third by Fubini's theorem since 
$\psi_1$ and $\psi_2$ are
absolutely integrable; the fourth by the inverse characteristic function
theorem (e.g.\ \cite[Theorem~6.2.3]{C}) since $\psi_2$ is $L^1$; 
and the last equality by Theorem~\ref{thmththree} since 
$0<\int^\infty_{-\infty}f_1(x)f_2(x)dx<\infty$.\end{proof}

The next example is an application of Theorem~\ref{thmthseven}, and shows
that the conflation of two standard normal distributions is mean-zero
normal with half the variance of the standard normal. An intuitive
interpretation of this fact is that if the two standard normals reflect
the results of two independent experiments, then combining these results
effectively doubles the number of trials, thereby halving the variance
of the (sample) means. Normality is always preserved under conflation,
as will be seen in Theorem~\ref{thmsvone} below.

\begin{exam}\label{extheight} 
Let  $P_1=P_2$ be $N(0,1)$, so $\psi_1(t)=\psi_2(t)=e^{-t^2/2}$.  
Then 
$(\psi_1\otimes\psi_2)(t)=\int^\infty_{-\infty}e^{-(t-s)^2/2} 
e^{-s^2/2}ds=e^{-t^2/4}\int^\infty_{-\infty}
e^{-\left(s-\frac{t}{ 2}\right)^2}ds=e^{-t^2/4}\sqrt\pi$, so since
 \newline
$\int f_1(x)f_2(x)dx=\int^\infty_{-\infty}\frac{e^{-x^2/2}}{ \sqrt{2\pi}}
\frac{e^{-x^2/2}}{\sqrt{2\pi}}\, dx=\frac{1}{ 2\sqrt\pi}$,
Theorem~\ref{thmthseven} implies that $\&(P_1,P_2)$ is a.c. with 
characteristic function $\psi(t)=\frac{\sqrt\pi e^{-t^2/4}}{ 2\pi/2\sqrt\pi}=
e^{-t^2/4}$, so  $\&(P_1,P_2)$ is $N(0,\frac12)$.
\end{exam}

In general, the convolution of characteristic functions of discrete
measures may not even exist.  

\begin{exam}\label{exthnine} 
Let $P=P_1=P_2=\delta_0$.  Then it is easy to see that
$\&(P_1,P_2)=\delta_0$, and $\psi_P(t)\equiv 1$, so $\psi_{P_1}\otimes
\psi_{P_2}$ does not even exist.
\end{exam}

\section{Minimal Loss of Shannon Information}

Replacing several distributions by a single distribution will 
clearly result in some loss of information, however that is defined. 
A classical measure of information in a stochastic setting is the 
Shannon Information.

Recall that the {\it Shannon Information} $S_P(A)$
(also called the {\it surprisal}, or {\it self-information}) of  
a probability
$P$ for the event
$A\in\bbb$, is $S_P(A)=-\log_2P(A)$ (so the smaller the value of
$P(A)$, the greater the information or
surprise). The {\it information entropy}, which will not be addressed here,
is simply the expected value of the Shannon information.  

\begin{exam}\label{exfone}  
If $P$ is uniformly distributed on $(0,1)$, and $A=(0,\frac14)\cup(\frac12,\frac34)$,
then $P(A)=\frac12$,  so $S_P(A)=-\log_2(P(A))=1$. 
 Thus exactly one bit of information is obtained by observing $A$, namely,
that the value of the second binary digit is 0.
\end{exam}

\begin{dfn}\label{dfnftwo} 
The ({\it joint}) {\it Shannon Information of  $P_1, P_2,\dots, P_n$  for
the event} $A\in\bbb$, is
$$S_{\{P_1,\dots, P_n\}}(A)=S_P(X_1\in A,\dots, X_n\in A)=
\sum^n_{i=1}S_{P_i}(A)=-\log_2\prod^n_{i=1}
P_i(A)$$
where $\{X_i\}$ are independent random variables with distributions
$\{P_i\}$, respectively, 
and the {\it loss between the Shannon Information of $Q \in \calp$ and  
$P_1,\dots, P_n$ for the event}
$A\in \bbb$ is  
$S_{\{P_1,\dots, P_n\}}(A)-S_Q(A)$  if 
$\prod^n_{i=1}P_i(A)>0$,  and is 0 if 
$Q(A)=\prod^n_{i=1}P_i(A)=0$.  
\end{dfn}

Note that the maximum loss is always
non-negative (taking $A=\uom$).

The next theorem characterizes
conflation as the minimizer of loss of Shannon Information.  

\begin{thm}\label{thmfthree} 
If  $P_1,\dots, P_n\in\calp$  satisfies $\|\mu_\infty(P_1,P_2,\dots, P_n)\|>0$, 
then 
\begin{itemize}
\item[\rm (i)] the conflation $\& (P_1,P_2,\dots, P_n)$ exists; 
\item[\rm (ii)] for every
$Q\in\calp$, the maximum loss between the Shannon Information of  $Q$ and    
$P_1,\dots, P_n$ is at
least $\log_2(\|\mu_\infty(P_1,P_2,\dots, P_n)\|^{-1})$;
and 
\item[\rm (iii)] the bound in (ii) is attained if and only if 
$Q = \&(P_1, P_2,\dots, P_n)$.  
\end{itemize}
\end{thm}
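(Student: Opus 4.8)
The plan is to phrase everything in terms of the unnormalized limit $\mu_\infty=\mu_\infty(P_1,\dots,P_n)$ and to reduce (ii) and (iii) to a single \emph{domination inequality}, $\mu_\infty(A)\le\prod_{i=1}^n P_i(A)$ for every $A\in\bbb$. For (i), Theorem~\ref{thmtthree} already gives $\mu_j\to\mu_\infty$ vaguely together with $\|\mu_j\|\to\|\mu_\infty\|$, and the hypothesis $\|\mu_\infty\|>0$ then lets me normalize: for every continuous $f$ vanishing at infinity, $\int f\,d(\mu_j/\|\mu_j\|)=\|\mu_j\|^{-1}\int f\,d\mu_j\to\|\mu_\infty\|^{-1}\int f\,d\mu_\infty$, so $\mu_j/\|\mu_j\|$ converges vaguely to the probability measure $Q^\ast:=\mu_\infty/\|\mu_\infty\|$, and hence $\&(P_1,\dots,P_n)$ exists and equals $Q^\ast$.

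The technical core, and the step I expect to be the main obstacle, is the domination inequality. I would first prove it on a single dyadic interval $D=(k2^{-j},(k+1)2^{-j}]$, where by definition $\mu_j(D)=\prod_i P_i(D)$ while Theorem~\ref{thmtthree}(i) gives $\mu_\infty(D)\le\mu_j(D)$, hence $\mu_\infty(D)\le\prod_i P_i(D)$. Since every open $U\subset\bbr$ is a countable disjoint union $\bigsqcup_m D_m$ of dyadic intervals, Lemma~\ref{lemtfour} (in its countable form) yields $\prod_i P_i(U)=\prod_i\sum_m P_i(D_m)\ge\sum_m\prod_i P_i(D_m)\ge\sum_m\mu_\infty(D_m)=\mu_\infty(U)$. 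The delicate point is the passage to a general Borel $A$: because $A\mapsto\prod_i P_i(A)$ is not a measure, I would invoke outer regularity of the finitely many $P_i$ simultaneously, choosing for each $\ep>0$ one open $U\supseteq A$ with $P_i(U)<P_i(A)+\ep$ for all $i$ at once; then $\mu_\infty(A)\le\mu_\infty(U)\le\prod_i P_i(U)\to\prod_i P_i(A)$ as $\ep\searrow 0$.

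For (ii), fix $Q\in\calp$; the loss for an event $A$ with $\prod_i P_i(A)>0$ is $\log_2\!\bigl(Q(A)/\prod_i P_i(A)\bigr)$, and any $A$ with $\prod_i P_i(A)=0<Q(A)$ already produces infinite loss, so I may assume the ratio is everywhere finite. Partitioning $\bbr$ into the level-$j$ dyadic intervals $\{D_k\}$ gives $\sum_k Q(D_k)=1$ and $\sum_k\prod_i P_i(D_k)=\|\mu_j\|$; were the ratio bounded by $c$ on every $D_k$, summation would force $1\le c\,\|\mu_j\|$, so $\sup_k Q(D_k)/\prod_i P_i(D_k)\ge\|\mu_j\|^{-1}$. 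Because $\|\mu_j\|\searrow\|\mu_\infty\|$ by Theorem~\ref{thmtthree}, taking the supremum over $j$ shows the maximum loss is at least $\sup_j\log_2\|\mu_j\|^{-1}=\log_2\|\mu_\infty\|^{-1}$.

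For (iii), the ``if'' direction follows at once from the domination inequality: $Q^\ast(A)/\prod_i P_i(A)=\mu_\infty(A)/(\|\mu_\infty\|\prod_i P_i(A))\le\|\mu_\infty\|^{-1}$ for every $A$, with $Q^\ast(A)=0$ wherever $\prod_i P_i(A)=0$, so the maximum loss for $Q^\ast$ is at most $\log_2\|\mu_\infty\|^{-1}$, and (ii) forces equality. For ``only if'', attaining the bound means $Q(A)\le\|\mu_\infty\|^{-1}\prod_i P_i(A)$ for all $A$, hence $Q(D_k)\le\|\mu_\infty\|^{-1}\mu_j(D_k)$ on level-$j$ dyadic intervals. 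The total slack $\sum_k\bigl(\|\mu_\infty\|^{-1}\mu_j(D_k)-Q(D_k)\bigr)=\|\mu_j\|/\|\mu_\infty\|-1$ tends to $0$; since every summand is nonnegative, the partial sum over the sub-intervals of any fixed dyadic $D$ also tends to $0$, i.e.\ $\|\mu_\infty\|^{-1}\mu_j(D)-Q(D)\to0$. As $\mu_j(D)\to\mu_\infty(D)$ on every dyadic interval (proof of Theorem~\ref{thmtthree}(ii)), this pins down $Q(D)=\mu_\infty(D)/\|\mu_\infty\|=Q^\ast(D)$ for all dyadic $D$, and since these generate $\bbb$ I conclude $Q=Q^\ast=\&(P_1,\dots,P_n)$.
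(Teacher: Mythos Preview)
Your argument is correct and follows the same high-level strategy as the paper (translate to ratios $Q(A)/\prod_i P_i(A)$, work on dyadic partitions, use pigeonhole), but the execution differs in several useful ways. First, you isolate and prove the domination inequality $\mu_\infty(A)\le\prod_i P_i(A)$ for \emph{all} Borel $A$ via open sets and outer regularity; the paper only establishes it for finite unions of dyadic intervals and asserts this suffices since they generate $\bbb$, a step that is somewhat delicate given that $A\mapsto\prod_i P_i(A)$ is not a measure---your approach is more careful here. Second, you avoid Lyapounov's theorem entirely: the paper invokes closedness of the range of a vector measure to produce a single $A$ achieving the lower bound in (ii), whereas you simply let $j\to\infty$ to get $\sup_A Q(A)/\prod_i P_i(A)\ge\sup_j\|\mu_j\|^{-1}=\|\mu_\infty\|^{-1}$, which is all that (ii) actually requires. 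Third, for the ``only if'' direction of (iii), the paper exhibits an explicit dyadic sub-interval $D_M$ where the ratio strictly exceeds $\|\mu_\infty\|^{-1}$ whenever $Q\ne\&$; your vanishing-slack argument instead shows directly that attaining the bound forces $Q(D)=Q^\ast(D)$ on every dyadic $D$. Your route is a bit more elementary (no Lyapounov) and the Borel-level domination inequality is a clean organizing device; the paper's witness construction in (iii) is more concrete but logically equivalent.
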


\begin{proof}  Fix $P_1,\dots,P_n\in\calp$, and for brevity, let $\mu_j=\mu_j
(P_1,P_2,\dots, P_n)$  for all $j\in \bbn$, and $\mu_\infty = \mu_\infty
(P_1,P_2,\dots, P_n)$.  For (i), note that
by Theorem~\ref{thmtthree}, $\mu_j$ converges vaguely to $\mu_\infty$, and  
$\lim_{j\to\infty}\|\mu_j\|=\|\mu_\infty\| >0$, so $\mu_j\|\mu_j\|^{-1}$ converges vaguely to
the probability measure $\mu_\infty\|\mu_\infty\|^{-1}$, 
which implies that $\&(P_1,P_2,\dots, P_n)$ exists.  

For (ii) and (iii),
fix $Q\in \calp$, and let $\& =\&(P_1,P_2,\dots, P_n)$. It must be shown that
\begin{subequations}\label{fone}
\begin{align}
S_{\{P_1,\dots, P_n\}}(A)-S_Q(A)&\ge \log_2(\|\mu_\infty\|^{-1}) \hbox{ for some Borel }
A\label{fonea}\\
S_{\{P_1,\dots, P_n\}}(A)-S_Q(A)&> \log_2(\|\mu_\infty\|^{-1}) \hbox{ for some Borel }
A \hbox{ if } Q\ne\& \label{foneb}\\
\noalign{\noindent and}
S_{\{P_1,\dots, P_n\}}(A)-S_Q(A)&\le \log_2(\|\mu_\infty\|^{-1}) \hbox{ for all Borel }
A \hbox{ if } Q=\&. \label{fonec}\end{align}\end{subequations}
By definition of Shannon Information, and since $\log_2 (x)$ is increasing,
\eqref{fonea}--\eqref{fonec} are equivalent to
\begin{subequations}\label{ftwo}
\begin{align}
\frac{Q(A)}{\prod^n_{i=1}P_i(A)} & \ge \|\mu_\infty\|^{-1}\hbox{ for some Borel } A
\label{ftwoa}\\
\frac{Q(A)}{\prod^n_{i=1}P_i(A)} & > \|\mu_\infty\|^{-1}\hbox{ for some Borel } A \hbox{ if }
Q\ne\&  \label{ftwob}\\
\frac{Q(A)}{\prod^n_{i=1}P_i(A)} & \le \|\mu_\infty\|^{-1}\hbox{ for all Borel } A \hbox{ if }
Q=\&.  \label{ftwoc}\end{align}\end{subequations}
To establish \eqref{ftwoa}, fix $\ep$, $\|\mu_\infty\|^{-1}>\ep >0$.  
By Theorem~\ref{thmtthree}, $\|\mu_j\|\to\|\mu_\infty\|$ as $j\to\infty$, 
so there exists $j^*\in\bbn$ such that
\begin{equation}\label{fthree}
\|\mu_{j^*}\|^{-1}> \|\mu_\infty\|^{-1}-\ep >0.\end{equation}
For each $k\in\bbz$, let $q_k=Q\left(\frac{k}{ 2^{j^*}},\frac{k+1}{ 2^{j^*}}\right]$, and 
$p_k=\prod^n_{i=1} P_i\left(\frac{k}{ 2^{j^*}}, \frac{k+1}{ 2^{j^*}}\right]$,
note that by the definition of $\{\mu_j\}$,
\begin{equation}\label{ffour}
\|\mu_{j^*}\|=\sum_{k\in\bbz} p_k.\end{equation}
By \eqref{fthree}, since  $Q$ is a probability, \eqref{ffour} implies that 
$1=\sum_{k\in\bbz}q_k=$\newline
 $\sum_{k\in\bbz} p_k\|\mu_{j^*}\|^{-1}$, so there exists
$k^*\in\bbz$ such that
\begin{equation}\label{ffive}
q_{k^*}\ge p_{k^*}\|\mu_{j^*}\|^{-1}>0.\end{equation}
Hence, by \eqref{fthree} and \eqref{ffive}
  and the definition of $\{p_k\}$ and $\{q_k\}$,
\begin{equation}\label{fsix}
\frac{Q\left( \frac{k^*}{ 2^{j^*}}, \frac{k^*+1}{ 2^{j^*}}\right]}{
\prod^n_{i=1} P_i \left( \frac{k^*}{ 2^{j^*}}, \frac{k^*+1}{ 2^{j^*}}\right]}
\ge \|\mu_\infty\|^{-1}-\ep.\end{equation}
By Lyapounov's theorem, the range of a finite-dimensional vector measure
is closed (e.g.\ \cite{Ly} or \cite[Theorem~1.1]{EH}), so since  $\ep$ was arbitrarily
small, this proves \eqref{ftwoa}.

	To prove \eqref{ftwob}, suppose $Q\ne\&$. Then there exists a $c>0$, 
$k^*\in\bbz$ and $j^*\in \bbn$, such that for $D=\left( \frac{k^*}{ 2^{j^*}},
\frac{k^*+1}{ 2^{j^*}}\right]$, $\& (D)>0$ and $Q(D)>\&(D)+c\mu_\infty(D)$.
Since $\&=\frac{\mu_\infty}{\|\mu_\infty\|}$, this
	implies that 
\begin{equation}\label{fseven}
\frac{Q(D)}{ \mu_\infty(D)} > \|\mu_\infty\|^{-1}+c.\end{equation}
Since $\mu_j(D)\to \mu_\infty(D)$ as $j\to\infty$ by 
Theorem~\ref{thmtthree}(ii), 
\eqref{fseven} implies there exists an  $m\in \bbn$ so that
\begin{equation}\label{feight}
\frac{Q(D)}{\mu_{j^*+m} (D)}>\|\mu_\infty\|^{-1}+\frac{c}{ 2}\,.
\end{equation}
Note that 
$D=\bigcup_{k\in J} D_k$, where $D_k=\left( \frac{k}{ 2^{j^*+m}},
\frac{k+1}{ 2^{j^*+m}}\right]$
and $J=\{k^*2^m, k^*2^m+1,\dots, k^*2^m+2^m-1\}$.
Next, note that since $\frac{\sum a_k}{\sum b_k}\le \max_k
\left\{ \frac{a_k}{b_k}\right\}$
for nonnegative $\{a_k,b_k\}$, there exists $M\in J$ such that
\begin{equation}\label{fnine}
\frac{\sum_{k\in J} Q(D_k)}{\sum_{k\in J}\prod^n_{i=1} P_i(D_k)}\le
\max_{k\in J}\frac{Q(D_k)}{\prod^n_{i=1}P_i(D_k)} = 
\frac{Q(D_M)}{\prod^n_{i=1} P_i(D_M)}.\end{equation}
Then
\begin{equation}\label{ften}
\frac{Q(D_M)}{\prod^n_{i=1} P_i(D_M)}\ge
\frac{Q(D)}{\mu_{j^*+m}(D)} > \|\mu_\infty\|^{-1}\end{equation}
where the first inequality in \eqref{ften}  follows by 
\eqref{fnine}  and since $\mu_{j^*+m}(D)=
\sum_{k\in J}\prod^n_{i=1} P_i(D_k)$, and
the second by \eqref{feight}. This proves \eqref{ftwob}. 
Finally, suppose that $\&=Q$. Since
the class of sets $\left\{ \left(\frac{k}{ 2^j}\,, 
\frac{k+1}{ 2^j}\right]: j\in\bbn , k\in\bbz\right\}$
generates the Borel sigma algebra on $\bbr$, and since $Q=\&=\mu_\infty
\|\mu_\infty\|^{-1}$,
to prove \eqref{ftwoc} it is enough to show that for all $j\in\bbn$,
all finite sets $J\subset \bbn$ and all $D=\bigcup_{k\in J} \left(
\frac{k}{ 2^j},
\frac{k+1}{ 2^j}\right]$,
\begin{equation}\label{feleven}
\mu_\infty(D)\le\prod^n_{i=1} P_i(D)\end{equation}
but since $\lim_{j\to\infty} \mu_j(D)=\mu_\infty(D)$ and
$\mu_{j^*}(D)=\prod^n_{i=1} P_i(D)$, \eqref{feleven} follows by 
Theorem~\ref{thmtthree}(i).\end{proof}

\begin{coro}\label{corffour}  
If $P_1,\dots, P_n\in\calp$  are discrete with common atoms 
$A\ne\emptyset$, then $\&(P_1,\dots, P_n)$ is the
unique Borel probability distribution that minimizes the maximum loss
of Shannon Information between single Borel probability distributions
and $P_1, P_2,\dots,P_n$.
\end{coro}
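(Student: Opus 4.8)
The plan is to deduce the corollary directly from Theorem \ref{thmfthree} by verifying that its single hypothesis, $\|\mu_\infty(P_1,\dots,P_n)\|>0$, holds under the stated assumption of common atoms, and then to observe that Theorem \ref{thmthone} identifies the resulting $\mu_\infty$. First I would invoke Theorem \ref{thmthone}: since $P_1,\dots,P_n$ are discrete with common atom set $A\ne\emptyset$, we have $p_i(x)>0$ for every $i$ and every $x\in A$, so the quantity $\|\mu_\infty\|=\sum_{x\in A}\prod_{i=1}^n p_i(x)$ established in the proof of that theorem (equation \eqref{thone} summed over all dyadic intervals) is a sum of strictly positive terms over the nonempty set $A$, hence strictly positive. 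This is the crux of the reduction: the common-atom hypothesis is exactly what guarantees $\|\mu_\infty\|>0$.

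With $\|\mu_\infty\|>0$ confirmed, Theorem \ref{thmfthree} applies verbatim. Part (i) gives existence of $\&(P_1,\dots,P_n)$; parts (ii) and (iii) together state that for every $Q\in\calp$ the maximum loss of Shannon Information between $Q$ and $P_1,\dots,P_n$ is at least $\log_2(\|\mu_\infty\|^{-1})$, with equality if and only if $Q=\&(P_1,\dots,P_n)$. Reading this as an optimization statement, the conflation is the unique minimizer of the maximum loss, which is precisely the assertion of the corollary.

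The only remaining point to address carefully is the phrase ``minimizes the maximum loss'' in the corollary versus the inequalities \eqref{fonea}--\eqref{fonec} in the theorem: I would note that the maximum loss for a fixed $Q$ is $\sup_{A\in\bbb}\big(S_{\{P_1,\dots,P_n\}}(A)-S_Q(A)\big)$, that (ii) bounds this supremum below by $\log_2(\|\mu_\infty\|^{-1})$ for every $Q$, and that (iii) shows the bound is achieved exactly when $Q=\&$. Hence $\&(P_1,\dots,P_n)$ attains the smallest possible maximum loss, and uniqueness follows from the ``only if'' direction of (iii). I anticipate no genuine obstacle here; the entire content of the corollary is packaged in Theorem \ref{thmfthree}, and the work is confined to checking the hypothesis via the positivity of $\|\mu_\infty\|$ supplied by Theorem \ref{thmthone}. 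A one- or two-line proof citing these two theorems is all that is required.
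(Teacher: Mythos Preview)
Your proposal is correct and follows essentially the same route as the paper: verify $\|\mu_\infty(P_1,\dots,P_n)\|=\sum_{x\in A}\prod_{i=1}^n p_i(x)>0$ via Theorem~\ref{thmthone} and the nonemptiness of $A$, then invoke Theorem~\ref{thmfthree} (all three parts) to obtain existence and the unique-minimizer conclusion. The paper's own proof is the one- or two-line version you anticipated.
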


\begin{proof}
 It is easy to check that for discrete distributions  $P_1,\dots, P_n$ with common
atoms $A$, $\|\mu_\infty(P_1,\dots, P_n)\|=\sum_{x\in A}\prod^n_{i=1} P_i(x)$,
which by the definition of $A$ is strictly positive. The conclusion
then follows immediately from Theorems~\ref{thmthone}  and \ref{thmfthree}.
\end{proof}

\begin{thm}\label{thmffive}
 If $P_1,P_2,\dots, P_n$ are a.c.\ with densities $f_1,\dots, f_n$,
satisfying $$0<\int^\infty_{-\infty}\prod^n_{i=1} f_i (x)dx< \infty,$$
 then there are Borel probability distributions $\{P_{i,j}:i\in \{1,\dots, n\},
j\in\bbn\}$ such that
\begin{itemize}
\item[\rm (i)] for all $i$, $P_{i,j}$ converges vaguely to $P_i$ as
$j\to\infty$, 
\item[\rm (ii)] $\&(P_{1,j},\dots, P_{n,j})$ is the unique minimizer of
loss of Shannon Information from $P_{1,j},\dots, P_{n,j}$, and 
\item[\rm (iii)] $\&(P_1,\dots, P_n)$ is the vague limit of 
$\&(P_{1,j},\dots, P_{n,j})$ as $j\to\infty$.
\end{itemize}
\end{thm}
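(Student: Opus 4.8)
The plan is to take for $P_{i,j}$ the dyadic discretizations of the $P_i$ furnished by Definition~\ref{dfn2.1}, namely $P_{i,j}=\mu_j(P_i)$, and to reduce the entire statement to the discrete theory already in hand. First I would record that each $\mu_j(P_i)$ is a genuine probability measure, since its atom masses $P_i((k-1)2^{-j},k2^{-j}]$ sum to $1$. With that, part (i) is immediate from Theorem~\ref{thmtthree}(iv), which asserts precisely that $\mu_j(P_i)$ converges vaguely to $P_i$ as $j\to\infty$.

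For part (ii) the key point is that for every $j$ the family $\{P_{i,j}:i=1,\dots,n\}$ is discrete with a nonempty common atom set, so Corollary~\ref{corffour} applies verbatim and yields that $\&(P_{1,j},\dots,P_{n,j})$ is the unique Borel probability distribution minimizing the maximum loss of Shannon Information. To see the common atoms are nonempty, note that the hypothesis $0<\int\prod_i f_i$ forces the set $E=\{x:f_i(x)>0\text{ for all }i\}$ to have positive Lebesgue measure. Partitioning $E$ by the level-$j$ dyadic intervals, at least one such interval $D$ meets $E$ in positive Lebesgue measure, whence $P_i(D)\ge\int_{D\cap E}f_i>0$ for every $i$; the right endpoint of $D$ is then a common atom of all the $\mu_j(P_i)$.

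The heart of the argument is the identity
$$\&(\mu_j(P_1),\dots,\mu_j(P_n))=\frac{\mu_j(P_1,\dots,P_n)}{\|\mu_j(P_1,\dots,P_n)\|},$$
which I would derive directly from Theorem~\ref{thmthone}(ii). The p.m.f.\ of $\mu_j(P_i)$ at the atom $k2^{-j}$ is $P_i((k-1)2^{-j},k2^{-j}]$, so the numerator in the formula of Theorem~\ref{thmthone}(ii) is $\sum_k \delta_{k2^{-j}}\prod_i P_i((k-1)2^{-j},k2^{-j}]$, where the sum may be extended over all $k\in\bbz$ because any $k$ outside the common atom set contributes a vanishing product. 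By Definition~\ref{dfn2.1} this is exactly $\mu_j(P_1,\dots,P_n)$, and the denominator is its total mass. Granting the identity, part (iii) is then nothing more than the definition of conflation: since $P_1,\dots,P_n$ are a.c.\ with $0<\int\prod_i f_i<\infty$, Theorem~\ref{thmththree} guarantees that $\&(P_1,\dots,P_n)$ exists as the vague limit of $\mu_j(P_1,\dots,P_n)/\|\mu_j(P_1,\dots,P_n)\|$, and this quantity equals $\&(P_{1,j},\dots,P_{n,j})$.

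I expect no serious obstacle, the whole statement being a repackaging of the discrete machinery into a limiting form. The one step demanding care is the verification that the common atom set is nonempty for \emph{every} $j$ (so that Corollary~\ref{corffour} may legitimately be invoked at each stage), together with the justification that extending the numerator sum to all $k\in\bbz$ is harmless, which is exactly where the vanishing of the products over non-common indices is used.
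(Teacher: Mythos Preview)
Your proposal is correct and follows essentially the same route as the paper: take $P_{i,j}=\mu_j(P_i)$, invoke Theorem~\ref{thmtthree}(iv) for (i), apply Corollary~\ref{corffour} for (ii), and then identify $\&(\mu_j(P_1),\dots,\mu_j(P_n))$ with $\mu_j(P_1,\dots,P_n)/\|\mu_j(P_1,\dots,P_n)\|$ via Theorem~\ref{thmthone}(ii) so that (iii) reduces to Theorem~\ref{thmththree}. Your explicit verification that the common atom set is nonempty for every $j$ (via the positive-measure set $E=\{f_i>0\ \forall i\}$) is in fact more careful than the paper, which simply asserts compatibility.
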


\begin{proof}
For each $i\in \{1,\dots, n\}$ and $j\in \bbn$, let $P_{i,j} = 
\mu_j (P_i)$, and
note that $\mu_j(P_i)$ is a discrete p.m.\ for all $i$ and $j$, and
by Theorem~\ref{thmtthree}(iv), $\mu_j(P_i)\to P)_i$ vaguely as
$j\to\infty$ , which proves (i). Since $\{P_{i,j}:i\in \{1,\dots, n\}\}$ are compatible
for all $j\in\bbn$, $\mu_j(P_1),\dots, \mu_j(P_n)$ 
are discrete with at least one common atom, so by Theorem~\ref{thmthone} 
and Corollary~\ref{corffour}, $\& (P_{1,j},\dots, P_{n,j})=
\sum_{k\in Z}\prod^n_{i=1} P_i( (k-1)2^{-j}, k2^{-j}]$ 
is the unique minimizer of the maximum loss of Shannon
Information between single Borel p.m.'s and $\{P_{i,j}:i\in \{1,\dots, n\}\}$, 
which proves (ii). Finally,
note that for all $j\in\bbn$, 
$\prod^n_{i=1} P_i ((k-1)2^{-j},k2^{-j}]=\prod^n_{i=1}\mu_j(k2^{-j})$, 
so by the definition of $\{\mu_j\}$,
$\mu_j(P_1,\dots, P_n)=\sum_{k\in Z}\prod^n_{i=1}\mu_j(k2^{-j})\delta_{k2^{-j}}$,
and $\|\mu_j(P_1,\dots, P_n)\|=\sum_{k\in\bbz} \prod^n_{i=1} \mu_j (k2^{-j})>0$.
Hence, by Theorem~\ref{thmththree},
$$\&(P_{1,j},\dots, P_{n,j})=
\frac{\sum_{k\in\bbz}\prod^n_{i=1}\mu_j(k2^{-j})\delta_{k2^{-j}}
}{ \sum_{k\in\bbz}\prod^n_{i=1}\mu_j(k2^{-j})}=
\frac{\mu_j(P_1,\dots, P_n)}{ \|\mu_j(P_1,\dots, P_n)\|}$$
      converges vaguely to $\&(P_1,\dots, P_n)$, proving (iii).\end{proof}

\section{Minimax Likelihood Ratio Consolidations and Proportional Consolidations}

In classical hypotheses testing, a standard technique to decide
from which of $n$ known distributions given data actually came is to
maximize the likelihood ratios, that is, the ratios of the p.m.f.'s
or p.d.f.'s. Analogously, when the objective is not to decide from
which of $n$ known distributions $P_1,\dots, P_n$ the data came, but rather to decide how
best to consolidate data from those input distributions into a single
(output) distribution $P$, one natural criterion is to choose $P$ so as to
make the ratios of the likelihood of observing $x$ under  $P$ to the likelihood
of observing $x$ under {\it all} of the (independent) distributions $\{P_i\}$ as close as
possible. This motivates the notion of minimax likelihood ratio.

\begin{dfn}\label{dfnfvone} 
A discrete probability distribution $P^*\in\calp$  (with p.m.f.\ $p^*$)
is the {\it minimax likelihood ratio (MLR) consolidation of	discrete
distributions}  $P_1,\dots, P_n$ (with p.m.f.'s $\{p_i\}$) if
$$
\min_{\mbox{p.m.f.'s}\; p}\left\{
\max_{x\in\bbr}\frac{ p(x)}{\prod^n_{i=1}p_i(x)}-\min_{x\in\bbr}
\frac{p(x)}{
\prod^n_{i=1} p_i(x)}\right\}$$
is attained by	$p=p^*$ (where $0/0:=1$).  Similarly, an a.c.\ distribution 
$P^*\in\calp$  (with p.d.f.\ $f^*$)
is the {\it MLR consolidation of a.c.\ distributions} $P_1,\dots, P_n$ 
(with p.d.f.'s $f_1,\dots, f_n$) if
$$
\min_{\mbox{p.d.f.'s}\; f}\left\{
\esssup_{x\in\bbr} \frac{f(x)}{\prod^n_{i=1} f_i(x)}-
\essinf_{x\in\bbr} \frac{f(x)}{\prod^n_{i=1}f_i(x)}\right\}$$
is attained by $f^*$.
\end{dfn}

The min-max terms in \eqref{fvone}  and 
\eqref{fvtwo}  are similar to the min-max criterion
for loss of Shannon Information (Theorem~\ref{thmfthree}), 
whereas the others are
dual max-min criteria. Just as conflation was shown to minimize the loss
of Shannon Information, conflation will now be shown to also be the MLR
consolidation of the given input distributions.

\begin{thm}\label{thmfvtwo} 
If $P_1,\dots, P_n\in\calp$  are discrete with at least one common atom, or are
a.c.\ with p.d.f.'s $\{f_i\}$ satisfying $0<\int\prod^n_{i=1}f_i(x)dx<\infty$,
then $\&(P_1,\dots, P_n)$ is the unique MLR consolidation of $P_1,\dots, P_n$.
\end{thm}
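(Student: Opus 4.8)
The plan is to treat the discrete and absolutely continuous cases in parallel, exploiting the fact that for the conflation the likelihood ratio $r := p/\prod_i p_i$ (respectively $f/\prod_i f_i$) is \emph{constant} on the region where the product of the inputs is positive. Write $g=\prod_{i=1}^n p_i$ in the discrete case and $g=\prod_{i=1}^n f_i$ in the a.c.\ case, and set $N=\sum_x g(x)=\|\mu_\infty\|$ (discrete) or $N=\int_{-\infty}^\infty g(x)\,dx$ (a.c.). By Theorem~\ref{thmthone} (resp.\ Theorem~\ref{thmththree}) the conflation has mass/density $g/N$, so its ratio $r$ equals $1/N$ wherever $g>0$, and equals $0/0=1$ wherever $g=0$. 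The first reduction is to observe that any competitor $p$ (resp.\ $f$) placing positive mass where $g=0$ has $r=+\infty$ there, hence infinite objective value; so every candidate for the minimum is supported (a.e.) on $\{g>0\}$, and the analysis reduces to that set together with the constant value $1$ contributed by $\{g=0\}$.

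For the lower bound I would use the elementary mediant inequality $\max_k(a_k/b_k)\ge(\sum_k a_k)/(\sum_k b_k)\ge\min_k(a_k/b_k)$ for nonnegative $a_k,b_k$ (and its integral analogue, that a $g$-weighted average of $r$ lies between $\essinf r$ and $\esssup r$). Since any candidate has total mass $1$ on $\{g>0\}$ while $\int_{\{g>0\}}g=N$, the $g$-weighted average of $r$ on $\{g>0\}$ is exactly $1/N$; hence $\max r\ge 1/N\ge\min r$ on that set. Combining this with the value $1$ coming from the $0/0$ convention at the (uncountably many) points off the atoms in the discrete case --- and, in the a.c.\ case, from $\{g=0\}$ when it has positive Lebesgue measure --- gives the sharp bound: the objective is at least $|1/N-1|$ (discrete: $1/N-1$, since $N=\|\mu_\infty\|\le1$ by Lemma~\ref{lemtfour}), which is precisely the value attained by the conflation, so the conflation is a minimizer.

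Uniqueness would then follow from the equality case of the mediant inequality: a $g$-weighted average equals its own supremum (or infimum) only if the averaged quantity is constant $g$-a.e. Forcing $\max r=1/N$ (resp.\ $\esssup r=\max(1/N,1)$) therefore forces $r\equiv 1/N$ on $\{g>0\}$, i.e.\ $p=g/N$ (resp.\ $f=g/N$ a.e.), which is exactly $\&(P_1,\dots,P_n)$.

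The step I expect to be the main obstacle is the a.c.\ case, where $\esssup$ and $\essinf$ ignore null sets: one must split according to whether $\{g=0\}$ has positive Lebesgue measure (so that the value $1$ genuinely enters the essential range, giving optimal objective $|1/N-1|$) or is Lebesgue-null (so that $r=1/N$ a.e.\ and the optimal objective is $0$), and in each subcase verify both the sharp lower bound and the equality/uniqueness conclusion. Care is also needed because, unlike $\|\mu_\infty\|$, the integral $N=\int\prod_i f_i$ may exceed $1$, so one cannot assume $1/N\ge1$ and must keep the absolute value $|1/N-1|$ throughout; the discrete case is comparatively routine once the mediant inequality and the $0/0=1$ points are in hand.
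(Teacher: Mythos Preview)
Your proposal is correct and follows essentially the same approach as the paper: restrict to candidates supported on $\{g>0\}$, use the $0/0=1$ convention off that set to pin one side of the range of $r$, and use a weighted-average/mediant argument (which the paper obtains by invoking the proof of Theorem~\ref{thmfthree}) to bound the other side by $1/N$, with the equality case yielding uniqueness. Your two-way split of the a.c.\ case (on whether $\{g=0\}$ is Lebesgue-null) is just a minor reorganization of the paper's four-way split (on $N\lessgtr1$ and on whether $\prod_i f_i>0$ a.s.); the underlying argument is the same.
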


\begin{proof}  
First consider the discrete case, let $\{p_i\}$ denote the p.m.f.'s of 
$\{P_i\}$,
 respectively, and let $\emptyset\ne A\subset\bbr$ 
denote the common atoms of $\{P_i\}$, i.e.\ $A=\left\{x\in\bbr:
\prod^n_{i=1}p_i(x)\right\}>0$.  
By Theorem~\ref{thmthone}, $\&(P_1,\dots, P_n)$ 
 is discrete with p.m.f.\ $p^*(x)=
\frac{\prod^n_{i=1}p_i(x)}{ \sum_{y\in A}\prod^n_{i=1}
p_i(y)}$. For each p.m.f.\ $p$, let
$$\udel (p)=\sup_{x\in\bbr} \frac{p(x)}{\prod^n_{i=1} p_i(x)} - 
\inf_{x\in\bbr} \frac{p(x)}{\prod^n_{i=1}p_i(x)}\,.$$
 Then, since $p^*(x)=0$ for every $x\in A^c$, it follows from the definition of 
$p^*$ (and the 
 convention $0/0:=1$) that $\udel (p^*)=
\left(\sum_{y\in A}\prod^n_{i=1} p_i(y)\right)^{-1}-1\ge 0$.  
Thus, to establish the theorem for
$P_1,\dots, P_n$ discrete,
 it suffices to show that for all p.m.f.'s $p$
\begin{equation}\label{fvone}
\udel (p)\ge\left(\sum_{y\in A}\prod^n_{i=1} p_i(y)\right)^{-1}-1,
\hbox{ with equality if and only if } p=p^*.\end{equation}
If  $\sum_{y\in A} p(y)<1$ then there exists an
$x_0\in A^c$ with $p(x_0)>0$, so $\frac{p(x_0)}{ \prod^n_{i=1} p_i(x_0)}=\infty$ 
and $\udel (p)=\infty$, so \eqref{fvone}  is trivial. On the other
hand if  $\sum_{y\in A} p(y)=1$, then $\min_{x\in \bbr}
\frac{ p(x)}{\prod^n_{i=1} p_i(x)}\le 1$
which implies that  $\udel (p)\ge\max_{x\in\bbr}\frac{ p(x)}{
\prod^n_{i=1} p_i(x)}-1$ for all $p$, and the argument in the proof
of Theorem~\ref{thmfthree} shows equality holds if and only
$\frac{p(x)}{\prod^n_{i=1} p_i(x)}$ is constant, i.e.\ if and
only if $p=p^*$. This proves \eqref{fvone} and completes the argument when 
$\{P_i\}$ are discrete.

For the a.c.\ conclusion, fix $\{P_i\}$ a.c.\ with p.d.f.'s satisfying 
\newline $0<\int\prod^n_{i=1}f_i(x)dx<\infty$. By 
Theorem~\ref{thmththree}  $\& (P_1,\dots, P_n)$ is a.c.\ with p.d.f.\
$f^*(x)=\frac{\prod^n_{i=1}f_i(x)}{\int\prod^n_{i=1} f_i(y)dy}$. For each p.d.f.\ 
$f$, let
$$
\udel (f)=\esssup_{x\in\bbr} \frac{f(x)}{ \prod^n_{i=1}f_i(x)}-
\essinf_{x\in\bbr}\frac{f(x)}{\prod^n_{i=1}f_i(x)}.$$
{\bf Case 1.} $\int\prod^n_{i=1}f_i(x)dx\in (0,1]$, $\prod^n_{i=1}f_i(x)>0$
a.s.\ (e.g., $\{P_i\}$ arbitrary normal distributions). Then since 
$\prod^n_{i=1}f_i(x)>0$, $\frac{f^*(x)}{\prod^n_{i=1}f_i(x)}=
\frac{1}{ \int\prod^n_{i=1} f_i(y)dy}$, a.s.,
which is constant, so $\udel (f^*)=0$. Thus it suffices to show that for all $f$  as in
Case~1,
\begin{equation}\label{fvtwo}
\udel f(x)\ge 0\quad\hbox{ with equality if and only if }
f=f^*.\end{equation} 
If $f$ is not positive a.s., then 
$\ess\inf \frac{f}{\prod^n_{i=1}f_i}=0$ since  
$\prod^n_{i=1}f_i(x)>0$ a.s., so $\udel (f)=\ess\sup 
\frac{f}{ \prod^n_{i=1}f_i}>0$, 
and the inequality in
\eqref{fvtwo}  is satisfied. On the other hand, if  $f>0$ a.s., then 
$\udel (f)=\ess\sup_{x\in\bbr} \frac{f(x)}{\prod^n_{i=1}f_i(x)}-
\ess\inf_{x\in\bbr} \frac{f(x)}{\prod^n_{i=1}f_i(x)}\ge 0$, with equality if
and only if  $\frac{f(x)}{\prod^n_{i=1}f_i(x)}$ is constant a.s.; i.e.\ if and only if  
$f=f^*$ a.s., which completes
the argument for Case~1.  

The three other cases 
\begin{align*}
&\left\{\int\prod^n_{i=1}f_i(x)dx\in (0,1], 
\prod^n_{i=1}f_i(x) \hbox{ not } >0 \hbox{ a.s.}\right\},\\ 
&\left\{ \int\prod^n_{i=1} f_i(x)dx\in (1,\infty), \prod^n_{i=1}f_i(x)
>0\hbox{ a.s.}\right\},\\
&\left\{ \int\prod^n_{i=1}f_i(x)dx\in (1,\infty),
\prod^n_{i=1}f_i(x)\hbox{ not } >0 \hbox{ a.s.}\right\}\end{align*} 
follow similarly.\end{proof}

If the $\{P_i\}$ are a.c.\ but do not satisfy the integrability condition in the
hypotheses of Theorem~\ref{thmfvtwo}, both parts of the conclusion of 
Theorem~\ref{thmfvtwo}
may fail: the conflation may not be MLR; and MLR distributions may not
be unique.

\begin{exam}\label{exfvthree}  
Let $n=2$, and $P_1=P_2$ be as in Example~\ref{exthsix}, 
so the conflation $\&(P_1,P_2)$ exists
and is $\delta_0$, which is not MLR for $P_1,P_2$ since it is not even a.c. However, every
a.c.\ distribution with p.d.f.\ $f_\alpha (x)=\alpha x^{\alpha-1}$ 
for  $x\in (0,1)$ (and $=0$ otherwise), $0<\alpha\le \frac14$, is MLR  for
$P_1,P_2$. To see
this, recall that $\prod^n_{i=1}f_i(x)=(4x)^{-1}$
for $x\in (0,1)$, and $=0$ otherwise. Thus  
$\frac{f_\alpha (x)}{\prod^n_{i=1} f_i(x)} = 4xf_\alpha(x)=4\alpha x^\alpha$ 
for $x\in (0,1)$, so $\ess\sup_{x\in \bbr}\frac{f_\alpha (x)}{
\prod^n_{i=1}f_i(x)}=1$,  since off $(0,1)$,  $\frac{f_\alpha (x)}{
\prod^n_{i=1} f_i(x)}=1$,
and on $(0,1)$,  $\ess\sup_{x\in\bbr}\frac{f_\alpha (x)}{\prod^n_{i=1}f_i(x)}=
4\alpha\le 1$.  Next, $\ess\inf_{x\in\bbr}\frac{f_\alpha (x)}{\prod^n_{i=1} f_i(x)}
=0$ since $\frac{f_\alpha (x)}{\prod^n_{i=1}f_i(x)}=4\alpha x^\alpha$ for 
$x\in (0,1)$.  Thus $\udel (f_\alpha)=1$, so to show $f_\alpha$ is MLR, requires showing
that $\udel (f)\ge 1$ for all p.d.f.'s $f$.  Fix $f$, and note that if 
$\ess\inf_{x\in\bbr}\frac{f(x)}{\prod^n_{i=1} f_i(x)}=\delta >0$, then on $(0,1)$, 
$\frac{f(x)}{\prod^n_{i=1} f_i(x)}=4xf(x)\ge \delta$
a.s., so $f(x)\ge \frac{\delta}{ 4x}$ 
a.s., which cannot be a density since it is not integrable.	Hence, 
$\ess\inf_{x\in\bbr}\frac{f(x)}{\prod^n_{i=1}f_i(x)}=0$.
But $\ess\sup_{x\in\bbr}\frac{f(x)}{\prod^n_{i=1}f_i(x)}\ge 1$,
 since $f$ is a.s.\ nonnegative and $\prod^n_{i=1}f_i(x)=0$
for all $x$ not in $(0,1)$. Thus $\udel (f)\ge 1$ so $f_\alpha$ is MLR.
\end{exam}

In the underlying problem of consolidating the independent distributions
$P_1,\dots, P_n$ into a single distribution $Q$, a criterion similar to MLR is to require
that $Q$ reflect the relative likelihoods of identical individual outcomes
under the $\{P_i\}$. For example, if the likelihood of all the experiments
$\{P_i\}$ observing the identical outcome $x$ is twice that of the likelihood of all
the experiments $\{P_i\}$ observing $y$, then $Q(x)$ should also be twice as large as
$Q(y)$. This motivates the notion of proportional consolidation.

\begin{dfn}\label{dfnfvfour}  
For discrete $P_1,\dots, P_n\in\calp$  with p.m.f.'s $p_1,\dots, p_n$, respectively, the
discrete distribution $Q \in\calp$  is a {\it proportional consolidation of}
$P_1,\dots, P_n$ if its
p.m.f.\ $q$ satisfies
$$\frac{q(x)}{ q(y)}=\frac{\prod^n_{i=1}p_i(x)}{\prod^n_{i=1}p_i(y)}\qquad
\hbox{for all }x,y\in\bbr.$$
Similarly, for a.c.\ $P_1,\dots, P_n\in\calp$  with p.d.f.'s 
$f_1,\dots, f_n$, respectively, the a.c.\ distribution
$Q \in\calp$ is a {\it proportional consolidation of} $P_1,\dots, P_n$ 
if its p.d.f.\ $g$ satisfies
$$\frac{g(x)}{ g(y)}=\frac{\prod^n_{i=1} f_i(x)}{\prod^n_{i=1}f_i(y}
\quad \hbox{for Lebesgue-almost-all }x,y\in\bbr.$$
\end{dfn}

\begin{thm}\label{thmfvfive} 
If $P_1,\dots, P_n\in \calp$  are discrete with at least one common atom, or are
a.c.\ with p.d.f.'s $\{f_i\}$ satisfying $0<\int\prod^n_{i=1} f_i(x)dx<\infty$, 
then the conflation $\&(P_1,\dots, P_n)$ is the unique
proportional consolidation of $P_1,\dots, P_n$.
\end{thm}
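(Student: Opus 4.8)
The plan is to prove Theorem~\ref{thmfvfive} by showing that in each of the two cases (discrete and a.c.), the conflation's p.m.f./p.d.f. is the unique normalized function satisfying the proportionality condition in Definition~\ref{dfnfvfour}. The key observation is that the proportionality requirement $q(x)/q(y) = \prod_i p_i(x)/\prod_i p_i(y)$ forces $q$ to be a constant multiple of $\prod_{i=1}^n p_i$, and the constant is then pinned down uniquely by the normalization $\sum q = 1$ (respectively $\int g = 1$).

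First I would handle the discrete case. By Theorem~\ref{thmthone}, since $P_1,\dots,P_n$ have at least one common atom, the conflation exists and has p.m.f. $p^*(x) = \prod_{i=1}^n p_i(x) / \sum_{y\in A}\prod_{i=1}^n p_i(y)$, where $A$ is the set of common atoms. This $p^*$ visibly satisfies the proportionality relation, so $\&(P_1,\dots,P_n)$ is a proportional consolidation. For uniqueness, I would take any proportional consolidation $Q$ with p.m.f. $q$. Fixing any $y_0\in A$ (so $\prod_i p_i(y_0)>0$), the defining relation gives $q(x) = q(y_0)\,\prod_i p_i(x)/\prod_i p_i(y_0)$ for all $x$, so $q$ is proportional to $\prod_i p_i$ with proportionality constant $c = q(y_0)/\prod_i p_i(y_0)$. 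Summing over $x$ and using $\sum_x q(x)=1$ forces $c = \bigl(\sum_{x\in A}\prod_i p_i(x)\bigr)^{-1}$, hence $q=p^*$ and $Q=\&(P_1,\dots,P_n)$. I must also dispose of the degenerate possibility that $q(y_0)=0$ for every $y_0\in A$: if so, proportionality forces $q\equiv 0$ on $A$, but then $q$ can be supported only off $A$, where every $\prod_i p_i(y)=0$, making the ratio $0/0:=1$ constraints collapse; one checks this cannot yield a probability mass function proportional to $\prod_i p_i$, so no such consolidation exists and uniqueness holds vacuously against it.

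The a.c. case is entirely parallel, using Theorem~\ref{thmththree} in place of Theorem~\ref{thmthone}. Under $0<\int\prod_i f_i\,dx<\infty$, the conflation exists and has p.d.f. $f^*(x)=\prod_{i=1}^n f_i(x)/\int\prod_{i=1}^n f_i(y)\,dy$, which satisfies the Lebesgue-a.e. proportionality relation, so it is a proportional consolidation. For uniqueness I would take any a.c. proportional consolidation $Q$ with p.d.f. $g$. The relation $g(x)/g(y)=\prod_i f_i(x)/\prod_i f_i(y)$ for a.e. $x,y$ forces $g = c\prod_i f_i$ a.e. for some constant $c\ge 0$ (pick a reference point $y$ in the set of positive finite density, which has positive measure since $\int\prod_i f_i>0$), and integrating against $\int g=1$ determines $c=\bigl(\int\prod_i f_i\bigr)^{-1}$, giving $g=f^*$ a.e. and $Q=\&(P_1,\dots,P_n)$.

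The main obstacle I anticipate is making the ``proportionality forces $g=c\prod_i f_i$'' step rigorous in the a.e. setting, since the ratio condition is stated for a.e. pair $(x,y)$ and one must extract a single global constant $c$ from it rather than a constant that depends on the reference point. The clean way is a Fubini-type argument: the condition $g(x)\prod_i f_i(y)=g(y)\prod_i f_i(x)$ holding for a.e. $(x,y)$ means $g(x)\prod_i f_i(y)-g(y)\prod_i f_i(x)=0$ a.e.; integrating $y$ over the positive-measure set where $\prod_i f_i(y)>0$ yields $g(x)=c\prod_i f_i(x)$ a.e. with $c$ the common ratio, and the hypothesis $0<\int\prod_i f_i<\infty$ guarantees this integral is finite and positive so $c$ is well-defined and normalization is possible. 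The discrete analogue is easier since one may argue pointwise at a genuine atom $y_0$.
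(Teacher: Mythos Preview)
Your proposal is correct and follows essentially the same approach as the paper: both invoke Theorems~\ref{thmthone} and \ref{thmththree} to obtain the explicit p.m.f./p.d.f.\ of the conflation, verify the proportionality relation directly, and then argue uniqueness. The only stylistic difference is that you argue uniqueness directly (fixing a reference point $y_0$ and showing any proportional consolidation must equal $p^*$ after normalization), whereas the paper argues contrapositively (if $Q\ne\&(P_1,\dots,P_n)$, then since both sum/integrate to one there exist $x,y$ with $q(x)>p^*(x)$ and $q(y)<p^*(y)$, violating proportionality); these are two sides of the same elementary observation, and your Fubini remark for the a.c.\ case in fact supplies more detail than the paper's ``follows similarly.''
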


\begin{proof}  First consider the case where $\{P_i\}$ are discrete, and let $\{p_i\}$ be the
p.m.f.'s for $\{P_i\}$, respectively. By Theorem~\ref{thmthone} again, 
$\&(P_1,\dots, P_n)$ is discrete with p.m.f.\
$p^*(x)=\frac{\prod^n_{i=1}p_i(x)}{\sum_{y\in\bbr}\prod^n_{i=1}p_i(y)}$ 
for all $x\in\bbr$. Thus
$\frac{p^*(x)}{ p^*(y)} = \frac{\prod^n_{i=1} p_i(x)}{\prod^n_{i=1}
p_i(y)}$, so  $\&(P_1,\dots, P_n)$ is a proportional consolidation of 
$P_1,\dots, P_n$. To see that $\&(P_1,\dots, P_n)$ is
the unique proportional consolidation, suppose $Q\ne \&(P_1,\dots, P_n)$, 
and set $q(x)=Q(x)$ for all $x\in\bbr$. Since,
$Q\ne\&(P_1,\dots, P_n)$, it follows from Theorem~\ref{thmthone} that there exist 
$x,y\in\bbr$ so that $q(x)>\frac{\prod^n_{i=1}p_i(x)}{\sum_{z\in\bbr}
\prod^n_{i=1} p_i(z)}$ and
 $q(y)<\frac{\prod^n_{i=1}p_i(y)}{\sum_{z\in\bbr}\prod_{i=1}^np_i(z)}$, so 
$\frac{q(x)}{ q(y)}>\frac{\prod^n_{i=1}p_i(x)}{\prod^n_{i=1}p_i(y)}$, and 
$Q$ is not a proportional consolidation of $P_1,\dots, P_n$.  
The case where $P_1,\dots, P_n$ are a.c.\ follows
similarly, again using 
Theorem~\ref{thmththree}  in place of Theorem~\ref{thmthone}.\end{proof}

Here, too, the conclusion for a.c.\ distributions may fail if the
integrability hypothesis condition is not satisfied.

\begin{exam}\label{exfvsix}  
 Let $n=2$, and $P_1=P_2$ be as in Example~\ref{exthfive}, so again
$\prod^n_{i=1}f_i(x)=(4x)^{-1}$ for $x\in (0,1)$, and $=0$
otherwise. This implies that $\frac{\prod^n_{i=1}f_i(x)}{
\prod^n_{i=1}f_i(y)}=\frac{y}{ x}$ for Lebesgue almost all $x,y\in(0,1)$. 
But there are no
p.d.f.'s $f$ with support on $(0,1)$ such that $\frac{f(x)}{ f(y)}=\frac{y}{ x}$ 
a.s., since then for fixed
$y$,  $f(x)=\frac{yf(y)}{ x}$ for almost all $x\in(0,1)$, and 
$\int^1_0 cx^{-1}dx=0$ if $c=0$ and $=\infty$ if $c>0$. Thus, there is no proportional
consolidation of this $P_1,P_2$ (in contrast to the conclusion of 
Example~\ref{exfvthree}
for these same distributions, where it was seen that there are many
MLR consolidations).
\end{exam}

\section{Conflations of Normal Distributions}

In describing the method used to obtain values for the fundamental
physical constants from the input data, NIST explains that certain
data ``are the means of tens of individual values, with each value
being the average of about ten data points" \cite[p.~679]{MTN2}, and
predicates interpretation of some of their conclusions on the
condition ``If the probability distribution associated with each
input datum is assumed to be normal" \cite[p.~483]{MT1}.  After comparing
the most recent (2006) results from electrical watt-balance and from
silicon-lattice sphere experiments used to estimate Planck's constant,
however, NIST determined that the means and standard deviations of
several distributions of input data were not sufficiently close, and
reported that their ``data analysis uncovered two major inconsistencies
with the input data," conceding that the resulting official NIST 2006
set of recommended values for the fundamental physical constants ``does
not rest on as solid a foundation as one might wish" \cite[p.~54]{MTN1}.
In order to eliminate this perceived inconsistency, the NIST task
group ``ultimately decided that $\dots$ the a priori assigned uncertainties of
the input data involved in the two discrepancies would be weighted
by the multiplicative factor 1.5," which ``reduced the discrepancies
to a level comfortably between two standard deviations" \cite[p.~54]{MTN1}.

But if the various input distributions are all normal, for example,
as in the NIST assumption, then every interval centered at the
unknown positive true value of Planck's constant has a positive
probability of occurring in every one of the independent
experiments. If the input data distributions happen to have
different means and variances, that does not imply the input is
``inconsistent." Thus in consolidating data from several independent
sources, special attention should be paid to the normal case.

The conflation of normal distributions has several important
properties -- it is itself normal (hence unimodal), and in addition
to minimizing the loss of Shannon Information (Theorem~\ref{thmfthree}) and
being the unique MLR consolidation (Theorem~\ref{thmfvtwo}) and the unique
proportional consolidation (Theorem~\ref{thmfvfive}), the conflation of normal
distributions also yields the classical weighted mean squares and
best linear unbiased estimators for general unbiased data, and
maximum likelihood estimators for normally-distributed unbiased
input data.

\begin{thm}\label{thmsone}  
If $P_i$ is $N(\mu_i,\sigma^2_i)$, $i=1,\dots, n$, then
$$\&(P_1,\dots, P_n)=
N\left(\frac{\sum^n_{i=1}\frac{\mu_i}{\sigma^{2}_i}}{
\sum^n_{i=1}\frac{1}{\sigma^{2}_i}}\,
,\frac{1}{\sum^n_{i=1}\sigma^{-2}_i}\right).$$
\end{thm}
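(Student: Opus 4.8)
The plan is to apply Theorem~\ref{thmththree}, which reduces the problem to normalizing the pointwise product of the Gaussian densities. First I would verify the integrability hypothesis. Writing $f_i(x) = (\sigma_i\sqrt{2\pi})^{-1}\exp(-(x-\mu_i)^2/(2\sigma_i^2))$, the product $\prod_{i=1}^n f_i$ is a strictly positive scalar multiple of a single Gaussian kernel (as the completion-of-squares computation below will show), and is therefore integrable with strictly positive integral. Hence $0<\int_{-\infty}^\infty\prod_{i=1}^n f_i(x)\,dx<\infty$, so Theorem~\ref{thmththree} applies and $\&(P_1,\dots,P_n)$ is a.c.\ with density proportional to $\prod_{i=1}^n f_i$.

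The heart of the argument is completing the square in the exponent. Setting $w_i=\sigma_i^{-2}$ and $W=\sum_{i=1}^n w_i$, the combined exponent is $-\tfrac12\sum_{i=1}^n w_i(x-\mu_i)^2$. Expanding and collecting powers of $x$ gives $-\tfrac{W}{2}(x-\bar\mu)^2 + C$, where $\bar\mu = W^{-1}\sum_{i=1}^n w_i\mu_i$ and $C=\tfrac12\bigl(W\bar\mu^2-\sum_{i=1}^n w_i\mu_i^2\bigr)$ is a constant independent of $x$. Since every $x$-free factor (the leading constants $(\sigma_i\sqrt{2\pi})^{-1}$ together with $e^{C}$) is absorbed into the normalizing denominator in Theorem~\ref{thmththree}(ii), the conflation density is proportional to $\exp(-\tfrac{W}{2}(x-\bar\mu)^2)$.

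Finally I would identify this kernel as the density of $N(\bar\mu,W^{-1})$, which upon substituting $w_i=\sigma_i^{-2}$ yields exactly the asserted mean $\bigl(\sum_{i=1}^n \mu_i\sigma_i^{-2}\bigr)/\bigl(\sum_{i=1}^n \sigma_i^{-2}\bigr)$ and variance $\bigl(\sum_{i=1}^n \sigma_i^{-2}\bigr)^{-1}$. I do not expect a genuine obstacle here: the integrability condition is automatic for Gaussians, and the normalization built into Theorem~\ref{thmththree} means the normalizing constant never needs to be evaluated. The only point requiring care is the bookkeeping of the $x$-independent factors, so that they are legitimately discarded through the normalization rather than tracked through an explicit integral — an algebraic rather than analytic matter.
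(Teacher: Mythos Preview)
Your proposal is correct and follows essentially the same approach as the paper: invoke Theorem~\ref{thmththree} to reduce to the normalized product of densities, then complete the square in the exponent to identify the result as a Gaussian with the stated mean and variance. The paper's proof is terser (it simply cites Theorem~\ref{thmththree} and calls the remainder ``a routine calculation by completing the square''), but your more explicit bookkeeping with $w_i=\sigma_i^{-2}$ and $W=\sum w_i$ carries out exactly that calculation.
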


\begin{proof} By Theorem~\ref{thmththree},  
$\&(P_1,\dots, P_n)$ is a.c.\ with density proportional to the
product of the densities for each distribution, and the conclusion then
follows immediately from the definition of normal densities and a routine
calculation by completing the square.\end{proof}

\begin{exam}\label{exstwo}  
If $P_1$  is $N(1,1)$ and $P_2$ is $N(2,4)$, then $\&(P_1,P_2)$ is
$N(\frac65, \frac45)$.
\end{exam}

The mean of the conflations of normals given in 
Theorem~\ref{thmsone},\newline
 $\sum^n_{i=1}\mu_i\sigma^{-2}_i\left(\sum^n_{i=1}
\sigma^{-2}_i\right)$,
is precisely the value of the weighted least 
squares estimate 
given by Aitken's generalization of the Gauss-Markov Theorem, and this 
simple observation will next be exploited to obtain several conclusions 
relating conflation and statistical estimators. 

First, however, it must be remarked that the mean of the conflation is not 
in general the same as the weighted least squares estimate. 
Conflation disregards outlier or ``inconsistent" data values, whereas 
weighted least squares gives full weight to all values. For instance, 
if one of the input distributions includes negative entries 
(e.g., is reported as a true Gaussian), and the others do not, then 
conflation eliminates the negative values. The following
example for the uniform distribution illustrates this, and the same 
argument can easily be applied to other distributions such as 
truncated normals (Theorem~\ref{thmsvtwo} below). 

\begin{exam}\label{exsixthree}  
Let $P_1$ be $U(0,1)$ and $P_2$ be $U(-0.1,1)$. 
By Theorem~\ref{thmththree}, the conflation of $P_1$ and $P_2$ is 
$\&(P_1,P_2)=U(0,1)$, which ignores the negative values of 
$P_2$ and has mean $\frac12$. 
The weighted least squares estimate, however, is easily seen to be  
$\left(\frac{12}{ 1} + \frac{12}{ 1.1^2}\right)^{-1}\left(
\frac{12}{ 2} + \left(\frac{9}{20}\right)\left(
\frac{12}{ 1.1^2}\right)\right)<.48$.
\end{exam}

To establish the link between conflation and statistical estimators, 
recall that a random variable $X$ is an unbiased estimator of an unknown
parameter $\theta$ if $EX=\theta$, and note that if $X$ is a r.v., then  
$N(X,\sigma^2)$ is a {\it random normal
distribution} with variable mean $X$ and fixed variance $\sigma^2$.  

\begin{thm}\label{thmsthree}  
If $X_1,\dots, X_n$ 
are independent unbiased estimators of  $\theta$ with finite variances 
$\sigma^2_1,\dots, \sigma^2_n$,
respectively, then $\Theta=\hbox{mean}(\&(N_1,\dots, N_n))$ is 
the best linear unbiased estimator for 
$\theta$, where $\{N_i\}$ are the random normal distributions $N_i=N(X_i,\sigma^2_i)$,
$i=1,\dots, n$.
\end{thm}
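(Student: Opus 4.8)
\noindent The plan is to combine the explicit formula of Theorem~\ref{thmsone} with the classical minimum-variance argument behind Aitken's generalization of the Gauss--Markov theorem. First I would identify $\Theta$ explicitly. For each fixed sample point $\omega$, the distribution $N_i=N(X_i(\omega),\sigma^2_i)$ is an ordinary normal distribution whose mean is the real number $X_i(\omega)$, so Theorem~\ref{thmsone} applies with each $\mu_i$ replaced by $X_i(\omega)$. Since the resulting formula for the mean is purely algebraic in the $X_i$, this gives
$$\Theta=\hbox{mean}(\&(N_1,\dots,N_n))=\frac{\sum^n_{i=1}X_i\sigma^{-2}_i}{\sum^n_{i=1}\sigma^{-2}_i}=\sum^n_{i=1}w_iX_i,\qquad w_i=\frac{\sigma^{-2}_i}{\sum^n_{j=1}\sigma^{-2}_j},$$
so $\Theta$ is a fixed linear combination of $X_1,\dots,X_n$.

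Next I would verify that $\Theta$ is a linear unbiased estimator. Linearity is clear from the displayed formula, and since $\sum^n_{i=1}w_i=1$ and $EX_i=\theta$ for every $i$, linearity of expectation gives $E\Theta=\theta\sum^n_{i=1}w_i=\theta$.

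It then remains to prove optimality. Let $T=\sum^n_{i=1}a_iX_i$ be an arbitrary linear estimator; requiring $ET=\theta$ for all $\theta$ forces $\sum^n_{i=1}a_i=1$. By the independence of the $\{X_i\}$, $\var(T)=\sum^n_{i=1}a^2_i\sigma^2_i$, so among linear unbiased estimators the variance is minimized by minimizing $\sum^n_{i=1}a^2_i\sigma^2_i$ subject to $\sum^n_{i=1}a_i=1$. Applying the Cauchy--Schwarz inequality to the vectors $(a_i\sigma_i)_{i=1}^{n}$ and $(\sigma^{-1}_i)_{i=1}^{n}$ gives $1=(\sum^n_{i=1}a_i)^2\le(\sum^n_{i=1}a^2_i\sigma^2_i)(\sum^n_{i=1}\sigma^{-2}_i)$, with equality if and only if $a_i\sigma_i$ is proportional to $\sigma^{-1}_i$, i.e.\ $a_i=w_i$. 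Thus $\var(T)\ge(\sum^n_{i=1}\sigma^{-2}_i)^{-1}=\var(\Theta)$, with equality exactly when $T=\Theta$, so $\Theta$ is the (unique) best linear unbiased estimator of $\theta$.

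The only subtle point is the very first step, namely legitimizing the substitution of the random means $X_i$ into the deterministic formula of Theorem~\ref{thmsone}; but this is immediate upon fixing a sample point, after which the means are genuine real numbers and the formula is algebraic. Everything after that is the textbook Gauss--Markov/Aitken minimization, which I expect to present no real obstacle.
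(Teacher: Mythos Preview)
Your proof is correct and follows essentially the same line as the paper: both identify $\Theta$ via Theorem~\ref{thmsone} (substituting the random means $X_i$ for $\mu_i$) to obtain the weighted least-squares combination $\Theta=\big(\sum_i\sigma_i^{-2}\big)^{-1}\sum_iX_i\sigma_i^{-2}$, and then conclude that this is the BLUE. The only difference is that the paper simply cites Aitken's generalization of the Gauss--Markov theorem for the optimality step, whereas you supply a short self-contained Cauchy--Schwarz argument; your version is thus a mild elaboration of the same proof rather than a genuinely different route.
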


\begin{proof}  By Theorem~\ref{thmsone},  $\&(N_1,\dots, N_n)$ is
\newline
$N\left(\sum^n_{i=1}\mu_i\sigma^{-2}_i\left(\sum^n_{i=1}\sigma^{-2}_i\right)^{-1},
\left(\sum^n_{i=1}\sigma^{-2}_i\right)^{-1}\right)$,
 where $\{\mu_i\}$  and $\{\sigma^2_i\}$ are the means and variances of 
$\{N_i\}$, respectively. Since $N_i$ is $N(X_i,\sigma^2_i)$ for each $i=1,\dots, n$,
where the  $\{X_i\}$ are r.v.'s, this implies that
$\&(N_1,\dots, N_n)$ is the random distribution\newline
$N\left(\sum^n_{i=1}X_i\sigma^{-2}_i\left(\sum^n_{i=1}\sigma^{-2}_i\right)^{-1},
\left(\sum^n_{i=1}\sigma^{-2}_i\right)^{-1}\right)$, so
\begin{equation}\label{sone}
\hbox{mean} (\&(N_1,\dots, N_n)=\left(\sum^n_{i=1}\sigma^{-2}_i\right)^{-1}
\sum^n_{i=1}X_i\sigma^{-2}_i.\end{equation}
Since the right hand side of \eqref{sone}  is the classical weighted least
squares estimator for $\theta$, Aitken's generalization of the Gauss-Markov
Theorem (e.g. \cite{Aitc}, \cite[Theorem~7.8a]{RS}) implies that it is the best linear unbiased estimator for 
$\theta$.\end{proof}

Note that normality of the distributions is in the {\it conclusion}, not
the hypotheses, of Theorem~\ref{thmsthree}. If, in addition, the underlying data
distributions are normal, this estimator is even a maximum likelihood estimator.  

\begin{thm}\label{thmsfour}  
If $X_1,\dots, X_n$ 
are independent normally-distributed unbiased estimators of  $\theta$ with finite
variances $\sigma^2_1,\dots, \sigma^2_n$, 
respectively, then $\Theta=\hbox{mean}(\&(N_1,\dots, N_n))$ is a maximum likelihood estimator for 
$\theta$, where $\{N_i\}$ are the random normal distributions
$N_i=N(X_i,\sigma^2_i)$, $i=1,\dots, n$.
\end{thm}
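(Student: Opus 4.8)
The plan is to combine the previous theorem's identification of the conflation mean with the classical fact that, for normally-distributed data, the weighted least squares estimator coincides with the maximum likelihood estimator. By Theorem~\ref{thmsthree}, we already know that
$$\Theta=\hbox{mean}(\&(N_1,\dots, N_n))=\left(\sum^n_{i=1}\sigma^{-2}_i\right)^{-1}\sum^n_{i=1}X_i\sigma^{-2}_i,$$
so the entire content of the present statement reduces to verifying that this particular linear combination of the data is a maximum likelihood estimator for $\theta$ under the \emph{additional} hypothesis that each $X_i$ is itself normally distributed (recall that in Theorem~\ref{thmsthree} normality was only in the conclusion, not the hypotheses).

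First I would set up the likelihood. Under the hypothesis, $X_1,\dots,X_n$ are independent with $X_i\sim N(\theta,\sigma^2_i)$, where the variances $\sigma^2_i$ are known and $\theta$ is the unknown parameter to be estimated. The joint density of the observations, viewed as a function of $\theta$, is proportional to $\prod^n_{i=1}\exp\!\left(-\frac{(X_i-\theta)^2}{2\sigma^2_i}\right)$, so the log-likelihood is, up to an additive constant independent of $\theta$, equal to $-\frac12\sum^n_{i=1}\sigma^{-2}_i(X_i-\theta)^2$.

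Next I would maximize. Differentiating the log-likelihood with respect to $\theta$ and setting the derivative to zero gives $\sum^n_{i=1}\sigma^{-2}_i(X_i-\theta)=0$, whose unique solution is $\hat\theta=\left(\sum^n_{i=1}\sigma^{-2}_i\right)^{-1}\sum^n_{i=1}X_i\sigma^{-2}_i$. Since the log-likelihood is a strictly concave quadratic in $\theta$ (its second derivative is $-\sum^n_{i=1}\sigma^{-2}_i<0$), this critical point is the global maximizer. Comparing $\hat\theta$ with the displayed expression for $\Theta$ from Theorem~\ref{thmsthree} shows they are identical, which is the desired conclusion.

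I do not expect a genuine obstacle here; the result is essentially a one-line corollary of Theorem~\ref{thmsthree} together with a standard MLE computation. The only point requiring mild care is the logical structure: the weighted-mean-of-squares form of $\Theta$ is \emph{already} supplied by Theorem~\ref{thmsthree} (which does not assume normality of the data), so the proof need only invoke that form and then perform the short normal-likelihood maximization under the strengthened normality hypothesis, observing that the maximizer coincides with $\Theta$. One should state clearly that the $\{\sigma^2_i\}$ are treated as known constants, so that $\theta$ is the single unknown being estimated; with that understood, the computation is routine and the identification is immediate.
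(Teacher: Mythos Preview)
Your proposal is correct and follows essentially the same approach as the paper: invoke the weighted-mean formula for $\Theta$ from Theorem~\ref{thmsthree} and then identify it with the MLE under the added normality hypothesis. The only difference is cosmetic --- the paper dispatches the second step by citing a textbook result (\cite[Theorem~7.8b]{RS}), whereas you write out the short log-likelihood maximization explicitly.
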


\begin{proof} Analogous
to proof of Theorem~\ref{thmsthree}, using \cite[Theorem~7.8b]{RS}.
\end{proof}

\section{Closure and Truncation Properties of Conflation}

If input data distributions are of a particular form, it is often
desirable that consolidation of the input also have that same
form. Theorem 6.1 showed that the conflation of normal distributions
is always normal, and the next theorem shows that many other
classical families of distributions are closed under conflation.

Recall that: a discrete probability distribution is {\it Bernoulli} with
parameter $p\in [0,1]$ if its p.m.f.\ is $p(1)=1-p(0)=p$, is {\it geometric}
 with parameter $p\in [0,1]$ if its
p.m.f.\ is $p(k)=(1-p)^{k-1}p$ for all $k\in\bbn$, is 
{\it discrete uniform} on $\{1,2,\dots, n\}$  if its p.m.f.\ is $p(k)=n^{-1}$
for all $k\in \{1,2,\dots, n\}$, is {\it Zipf}
with parameters $\alpha>0$ and  $n\in \bbn$ if its p.m.f.\ is proportional to 
$k^{-\alpha}$ for all $k\in \{1,2,\dots, n\}$, and is
{\it Zeta} with parameter  $\alpha>1$ if its p.m.f.\ is proportional to 
$k^{-\alpha}$ for all $k\in\bbn$; and an
a.c.\ probability distribution is {\it gamma} with parameters 
$\alpha\in\bbn$ and $\beta >0$ if its p.d.f.\ is
proportional to $x^{\alpha-1}e^{-x/\beta}$ for $x>0$, is {\it beta}
 with parameters $\alpha>1$ and $\beta>1$ if its p.d.f.\ is proportional
to $x^{\alpha-1}(1-x)^{\beta-1}$ for $0<x<1$, 
is {\it uniform} on $(a,b)$ for $a<b$ if its p.d.f.\ is constant 
$(b-a)^{-1}$ for $a<x<b$, is standard {\it LaPlace} (or
double-exponential) with parameter  $\alpha>0$ if its p.d.f.\ is proportional 
to
$e^{-|x|/\beta}$, $-\infty<x<\infty$,
is {\it Pareto} with parameters $\alpha>0$ and $\beta >0$ if its p.d.f.\
 is proportional to $x^{-(\alpha+1)}$ for $\beta < x<\infty$, and is
{\it exponential} with mean $a>0$ if its p.d.f.\ is proportional to
$e^{-x/\alpha}$ for $x>0$.

\begin{thm}\label{thmsvone}  
Let $P_1,P_2,\dots, P_n$ be compatible.
\begin{itemize}
\item[\rm (i)] If $\{P_i\}$ are {\rm Bernoulli} with parameters 
$\{p_i\}$ respectively, then
\item[]
$\&(P_1,\dots,P_n)$ 
is Bernoulli with parameter 
$p=\frac{\prod^n_{i=1}p_i}{\left(\prod^n_{i=1}p_i+\prod^n_{i=1}(1-p_i)\right)}$.  
\item[\rm (ii)] If $\{P_i\}$ are {\rm geometric} with parameters
$\{p_i\}$ respectively,\item[] then
$\&(P_1,\dots, P_n)$ is geometric with parameter
$p=1-\prod^n_{i=1}(1-p_i)$.
\item[\rm (iii)] If $\{P_i\}$ are {\rm discrete uniform} on $\{1,\dots, n_i\}$
respectively,\item[] then
$\&(P_1,\dots, P_n)$ is uniform on $\{1,\dots, \min_i\{n_i\}\}$.
\item[\rm (iv)] If $\{P_i\}$ are {\rm Zipf} with parameters 
$\{\alpha_i\}$ and $\{n_i\}$, respectively, then 
$\&(P_1,\dots, P_n)$ is Zipf with parameters  $\alpha=\sum^n_{i=1}\alpha_i$
and $n=\min_i\{n_i\}$.
\item[\rm (v)]If $\{P_i\}$ are {\rm Zeta} with parameters 
$\{\alpha_i\}$ respectively, then 
$\&(P_1,\dots, P_n)$ is Zeta with parameter  $\alpha=\sum^n_{i=1}\alpha_i$.
\item[\rm (vi)] If $\{P_i\}$ are {\rm gamma} with parameters 
$\{\alpha_i,\beta_i\}$
respectively,\item[] then
$\&(P_1,\dots, P_n)$ is gamma with parameters $\alpha=\sum^n_{i=1}\alpha_i-
(n-1)$, $\beta=\left(\sum^n_{i=1}(\beta_i)^{-1}\right)^{-1}$.
\item[\rm(vii)]If $\{P_i\}$ are {\rm beta} with parameters $\{\alpha_i,\beta_i\}$
respectively, then
$\&(P_1,\dots, P_n)$ is beta with parameters $\alpha=\sum^n_{i=1}\alpha_i-(n-1)$,
$\beta=\sum^n_{i=1}\beta_i-(n-1)$.
\item[\rm(viii)]If $\{P_i\}$ are {\rm continuous uniform} on intervals 
$\{(a_i,b_i)\}$
respectively, then
$\&(P_1,\dots, P_n)$ is uniform on $(\max_i a_i,\min_ib_i)$.
\item[\rm(ix)]If $\{P_i\}$ are {\rm LaPlace} with parameters $\{\alpha_i\}$ respectively, then
\item[]
$\&(P_1,\dots, P_n)$ is LaPlace with parameter $\alpha=\left(\sum^n_{i=1}(\alpha_i)^{-1}
\right)^{-1}$.
\item[\rm(x)]If $\{P_i\}$ are {\rm Pareto} with parameters $\{\alpha_i,\beta_i\}$
respectively, then 
\item[]
$\&(P_1,\dots, P_n)$ is Pareto with parameters $\alpha=\sum^n_{i=1}\alpha_i+n-1$
and $\beta=\max_i\beta_i$.  
\item[\rm(xi)]If $\{P_i\}$ are {\rm exponential} with means $\{\alpha_i\}$ 
respectively, then 
$\&(P_1,\dots, P_n)$ is exponential with mean 
$\alpha=\left(\sum^n_{i=1}\alpha^{-1}_i\right)^{-1}$.
\end{itemize}\end{thm}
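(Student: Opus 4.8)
The plan is to reduce every case to the two explicit characterizations already established. By Theorem~\ref{thmthone}, when the $\{P_i\}$ are discrete with a common atom the conflation is the distribution whose p.m.f.\ is proportional to $\prod_{i=1}^n p_i$; and by Theorem~\ref{thmththree}, when the $\{P_i\}$ are a.c.\ with $0<\int\prod_{i=1}^n f_i<\infty$ the conflation is a.c.\ with density proportional to $\prod_{i=1}^n f_i$. Thus for each family it suffices to (a) confirm the relevant hypothesis (a common atom in the discrete cases, integrability in the a.c.\ cases), and then (b) compute the product of the p.m.f.'s or p.d.f.'s, normalize, and recognize the result as a member of the same family with the asserted parameters. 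The compatibility assumption supplies (a) throughout, and the parameter constraints built into the definitions will supply integrability in (b).

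For the discrete families I would simply multiply the mass functions. In the Bernoulli case the common atoms lie in $\{0,1\}$ and the unnormalized masses are $\prod_i(1-p_i)$ at $0$ and $\prod_i p_i$ at $1$; normalizing gives exactly the stated $p$, and compatibility guarantees the denominator $\prod_i p_i+\prod_i(1-p_i)$ is positive. For geometric variables the product at $k\in\bbn$ is $\bigl(\prod_i(1-p_i)\bigr)^{k-1}\prod_i p_i$, proportional to $r^{k-1}$ with $r=\prod_i(1-p_i)$, i.e.\ geometric with parameter $1-r$, the series converging since $0\le r<1$. For discrete uniform, Zipf and Zeta the common atoms are $\{1,\dots,\min_i n_i\}$ (or all of $\bbn$ for Zeta), and the product is proportional to $k^{-\sum_i\alpha_i}$ there, which is precisely Zipf (resp.\ Zeta) with exponent $\sum_i\alpha_i$; for Zeta one notes $\sum_i\alpha_i>1$ follows from each $\alpha_i>1$.

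For the a.c.\ families I would multiply the densities and read off the exponents. The product of gamma densities is proportional to $x^{(\sum_i\alpha_i)-n}e^{-x\sum_i\beta_i^{-1}}$ on $x>0$, which is the gamma density of shape $\sum_i\alpha_i-(n-1)$ and scale $(\sum_i\beta_i^{-1})^{-1}$. The beta, Pareto, exponential, LaPlace and uniform cases are identical in spirit: collecting like factors yields $x^{\sum_i\alpha_i-n}(1-x)^{\sum_i\beta_i-n}$ (beta), $x^{-(\sum_i\alpha_i+n)}$ on $(\max_i\beta_i,\infty)$ (Pareto), $e^{-x\sum_i\alpha_i^{-1}}$ on $x>0$ (exponential), $e^{-|x|\sum_i\alpha_i^{-1}}$ (LaPlace), or a constant on $(\max_i a_i,\min_i b_i)$ (uniform), each then normalized to the asserted member. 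The $\pm(n-1)$ corrections in the gamma, beta and Pareto parameters arise exactly from aggregating the $n$ individual ``$-1$'' shifts in the exponents into the single shift of the normalized product.

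The only genuine content beyond bookkeeping lies in the integrability verifications, and this is where I would concentrate the care, since they are not automatic: Examples~\ref{exthfive} and~\ref{exthsix} exhibit products of densities that fail to be integrable, so Theorem~\ref{thmththree} cannot be applied blindly. Here, however, the defining constraints do the work --- $\alpha_i\in\bbn$ for gamma keeps the power of $x$ nonnegative near $0$ while the exponential factor controls infinity, $\alpha_i,\beta_i>1$ for beta keep both exponents above $-1$, and $\alpha_i>0$ with $n\ge1$ forces the Pareto exponent $-(\sum_i\alpha_i+n)$ below $-1$ so that integration over $(\max_i\beta_i,\infty)$ converges. Once integrability (and, in the discrete cases, the presence of a common atom) is confirmed, the identification of the normalized product with the claimed distribution is immediate, and each of (i)--(xi) follows.
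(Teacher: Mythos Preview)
Your proposal is correct and follows exactly the approach the paper takes: the paper's own proof simply states that (i)--(v) follow from Theorem~\ref{thmthone} and routine calculations, and (vi)--(xi) from Theorem~\ref{thmththree} and calculations. You have in fact supplied considerably more detail than the paper does, including the explicit products, the parameter bookkeeping, and the integrability checks that justify invoking Theorem~\ref{thmththree}.
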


\begin{proof} Conclusions (i)--(v) follow from 
Theorem~\ref{thmthone}  and routine
calculations, and (vi)--(xi) follow from 
Theorem~\ref{thmththree}  and calculations.\end{proof}

Note that for smaller values of the parameters of beta distributions, the
conflation may not be beta simply because the product of the densities
may not be integrable. The families of distributions identified in
Theorem~\ref{thmsvone}
  that are closed under conflation are by no means exhaustive.
For example, the conflation of $n$ Poisson distributions is not classical
Poisson, but is a discrete Conway-Maxwell-Poisson (CMP) distribution
with p.m.f.\ proportional to $\frac{\lam^k}{(k!)^n}$, $k=0,1,\dots$ 
and clearly the CMP family is closed
under conflation.

Recall that the conflation of Cauchy distributions is not Cauchy,
as was shown in Example~\ref{exthfive}. It is easy to see that the families of
binomial distributions and of chi-square distributions are not closed
under conflation, but chi-square comes very close in the following sense:
if $X$ is a random variable with distribution $\&(P_1,\dots, P_n)$ 
where $\{P_i\}$ are chi-square with $\{k_i\}$ degrees
of freedom, respectively, then $X/n$ is chi-square with 
$\sum^n_{i=1}k_i-2n+2$ degrees of freedom.

In practice, assumptions are often made about the form of the input
distributions, such as NIST's essential assumption that underlying
data is often normally distributed. But the true and estimated values
for Planck's constant clearly are never negative, so the underlying
distribution is certainly not truly normally distributed -- more
likely it is truncated normal. The additional assumption of exact
normality, in addition to their use of linearizing the observational
equations and then applying generalized least squares \cite[p.~481]{MT1},
introduces further errors into the NIST estimates.

Using conflations, however, the problem of truncation essentially
disappears -- it is automatically taken into account. The reason is
that another important feature of conflations is that it preserves
many classes of truncated distributions, where a distribution
of a certain type is called {\it truncated} if it is the conditional
distribution of that type conditioned to be in a (finite or
infinite) interval. For example, truncated normal distributions
include normal distributions conditioned to be positive (that is,
a.c.\ distributions with density function proportional to 
$e^{-(x-\mu)^2/2\sigma^2}$, $x>0$ (and zero elsewhere)), as is
often the case in experimental data involving estimates of many of
the fundamental physical constants.

\begin{thm}\label{thmsvtwo}  
If $P_1,P_2,\dots, P_n$ are compatible truncated normal (exponential, gamma,
LaPlace, Pareto) distributions, then $\&(P_1,P_2,\dots, P_n)$ is also a truncated normal
(exponential, gamma, LaPlace, Pareto, respectively) distribution.
\end{thm}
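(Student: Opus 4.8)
The plan is to reduce the claim to the closure results already established in Theorem~\ref{thmsvone} by writing each truncated density as a \emph{shape} factor times a \emph{truncation} indicator. By definition, a truncated member of any of the listed families is that family member conditioned to lie in some interval, so each $P_i$ is a.c.\ with a density of the form $f_i(x)=c_i\,g_i(x)\,I_{I_i}(x)$, where $I_i$ is an interval, $c_i>0$ is the conditioning constant, and $g_i$ is an (unnormalized) density of the untruncated family member: $g_i(x)\propto e^{-(x-\mu_i)^2/2\sigma_i^2}$ in the normal case, $\propto x^{\alpha_i-1}e^{-x/\beta_i}$ in the gamma case, $\propto e^{-|x|/\beta_i}$ in the LaPlace case, and similarly for the exponential and Pareto cases.

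The crucial elementary observation is that the indicators factor out of the product,
\[
\prod_{i=1}^n f_i(x)=\Bigl(\prod_{i=1}^n c_i\Bigr)\Bigl(\prod_{i=1}^n g_i(x)\Bigr)I_J(x),\qquad J:=\bigcap_{i=1}^n I_i,
\]
so that the product of the truncated densities is, up to a constant, the product of the \emph{untruncated} shapes restricted to the common interval $J$. First I would check the hypotheses of Theorem~\ref{thmththree}: compatibility together with the interval structure of the supports forces $J$ to have nonempty interior (if the open truncation intervals failed to overlap in positive measure, then for large $j$ no dyadic interval would meet every $\operatorname{supp} f_i$, contradicting $\|\mu_j\|>0$), which gives $\int\prod f_i>0$; finiteness of $\int\prod f_i$ holds because $\prod g_i$ is a constant multiple of a genuine family shape (established in the next step) and such shapes are integrable over $J$. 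With these hypotheses, Theorem~\ref{thmththree} yields that $\&(P_1,\dots,P_n)$ is a.c.\ with density proportional to $\bigl(\prod g_i\bigr)I_J$.

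Next I would invoke, family by family, the exact functional identities already proved in Theorem~\ref{thmsvone}. In each of the five cases the product $\prod_{i=1}^n g_i$ is proportional to a single shape $g$ of the \emph{same} family with the combined parameters: the normal shape by completing the square as in Theorem~\ref{thmsone}; and the exponential, gamma, LaPlace and Pareto shapes by the exponent- and power-bookkeeping recorded there (e.g.\ $\prod x^{-(\alpha_i+1)}=x^{-(\sum_i\alpha_i+n)}$ for Pareto, matching parameter $\alpha=\sum_i\alpha_i+n-1$ in Theorem~\ref{thmsvone}(x)). Substituting $\prod g_i\propto g$ into the conclusion of the previous paragraph shows that $\&(P_1,\dots,P_n)$ is a.c.\ with density proportional to $g\cdot I_J$, which is by definition exactly the density of the corresponding family member truncated to the interval $J$. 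Hence the conflation is a truncated normal (respectively exponential, gamma, LaPlace, Pareto) distribution.

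The step I expect to demand the most care is the shape bookkeeping for LaPlace and Pareto, whose shapes are not plain densities on all of $\bbr$. For LaPlace the point is that the paper's family is centred at $0$, so every $g_i\propto e^{-|x|/\beta_i}$ has its kink at the same place and $\prod g_i\propto e^{-|x|\sum_i\beta_i^{-1}}$ is again a centred LaPlace shape; were the centres allowed to differ the product would leave the family, so it is essential that ``truncated LaPlace'' means the \emph{standard} LaPlace restricted to an interval. For Pareto the native support threshold $\beta_i$ already acts as a one-sided truncation, so one works with the effective interval $J\cap(\max_i\beta_i,\infty)$ and checks that the power law $x^{-(\sum_i\alpha_i+n)}$ is integrable at infinity (it is, since $\sum_i\alpha_i+n>1$) before restricting. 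The remaining cases are routine once the shape identities of Theorem~\ref{thmsvone} are in hand.
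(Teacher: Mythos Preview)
Your proof is correct and follows the same approach as the paper, whose entire proof is the single line ``Immediate from Theorem~\ref{thmththree}.'' You have simply spelled out what that line means---factoring out the truncation indicators, checking the integrability hypothesis of Theorem~\ref{thmththree}, and then invoking the shape identities already recorded in Theorems~\ref{thmsone} and~\ref{thmsvone} to recognize the product $\prod g_i$ as a member of the same family.
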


\begin{proof} Immediate from Theorem~\ref{thmththree}.\end{proof}

The above example of determination of the values of the fundamental
physical constants is only one among many scientific situations
where consolidation of dissimilar data is problematic. Some
government agencies, such as the Methods and Data Comparability
Board of the National Water Quality Monitoring Council \cite{MDC}, have
even established special programs to address this issue. Perhaps
the method of {\it conflating} input data will provide a practical and
simple, yet optimal and rigorous method to address this problem.

\section*{Acknowledgement}

            The author wishes to express his gratitude to Professors
            Matt Carlton, Dan Fox, Jeff Geronimo, Kent Morrison, Carl
            Spruill, and John Walker for valuable advice and suggestions,
            to Editor Steve Spicer of Water, Environment and Technology,
            and especially to Professor Ron Fox for his extensive input.
The author is
            also grateful to the Free University of Amsterdam and the
            California Polytechnic State University in San Luis Obispo,
            where much of this research was accomplished.

\end{document}